\newtheorem{theorem}{Theorem}[section]
\newtheorem{teo}[theorem]{Theorem}
\newtheorem{lemma}[theorem]{Lemma}
\newtheorem{lem}[theorem]{Lemma}
\newtheorem{prop}[theorem]{Proposition}
\newtheorem{coro}[theorem]{Corollary}
\theoremstyle{definition}
\newtheorem{defi}[theorem]{Definition}
\newtheorem{ex}[theorem]{Example}
\newtheorem{exs}[theorem]{Examples}
\newtheorem{rem}[theorem]{Remark}
\DeclareMathOperator{\Hom}{Hom}
\DeclareMathOperator{\End}{End}
\newcommand{\wh}{\widehat}
\def\Hom{\mathrm{Hom}}
\def\g{\mathfrak g}
\def\f{\mathfrak f }
\def\l{\mathfrak l}
\def\h{\mathfrak h}
\def\n{\mathfrak n}
\def\z{\mathfrak z}
\def\z{\mathfrak z}
\def\DD{\mathbb D}
\def\ad{\mathrm{ad}}
\def\om{\omega}
\def\dwzz{\delta_{W}^{\Lambda^2\z}}
\def\wzn{(W\wedge \z)^{\n}}
\def\dz{\delta_{\z}}
\def\wg{\wedge}
\def\al{\alpha}
\def\lial{\lambda_{i,\alpha}}
\def\leal{\lambda_{e,\alpha}}
\author{Marco A. Farinati\thanks{Member of CONICET. Partially supported by
PIP 11220110100800CO, and UBACYT 20021030100481BA, mfarinat@dm.uba.ar.} 
\ and Alejandra Patricia Jancsa\thanks{Partially supported by 
PIP 11220110100800CO and UBACYT 20021030100481BA,
pjancsa@dm.uba.ar }
}
\begin{document}
\title{Lie bialgebra structures on 2-step nilpotent graph algebras}
\maketitle

\begin{abstract}
We generalize a result on the Heisenberg Lie algebra that gives restrictions to possible
Lie bialgebra cobrackets on 2-step nilpotent algebras with some additional properties.
For the class of 2-step nilpotent Lie algebras coming from graphs, we
describe these extra properties in a very easy graph-combinatorial way. We
exhibit applications for $\f_n$, the free 2-step nilpotent Lie algebra.
\end{abstract}
\section*{Introduction}

A {\bf Lie bialgebra} is  a triple $(\g,[-,-],\delta)$ where
 $(\g,[-,-])$ is a Lie algebra        and   $\delta:\g\to\Lambda^2\g$
is a linear map such that 
\begin{itemize}
 \item  
$\delta$
satisfies the co - Jacobi identity.
In Sweedler type notation, if $\delta(x)=x_1\wedge x_2$ (sum understood), 
then co-Jacobi condition reads
\[\delta(x_1)\wedge x_2-x_1\wedge \delta(x_2)=0\in \Lambda^3\g\]
 \item $\delta$ satisfies the 1-cocycle
condition $ \delta[x,y]=[\delta x,y]+[x,\delta y]\in \Lambda^2\g$.
\end{itemize}
Lie bialgebras first appear as classical limit of quantum objects, studying
deformations of Hopf algebras and the Quantum Yang-Baxter equation, presented by Drinfel'd.
They also appear in
geometry as Poisson Lie structures on Lie group, and in algebraic topology as 
additional algebraic structure on the space of
(free homotopy classes of) curves in oriented surfaces.
Lie bialgebra structures with underlying complex  simple x Lie algebras were classified
by Belavin and Drinfel'd \cite{BD} under a non-degenerated assumption
 (called factorisable case)
and after that a lot of people work on semi simple and  reductive case. In \cite{FJ} we studied 
the situation of a Lie algebra $\l =\g\times V$ (where $V$ is an abelian factor) and 
the problem of determine all 
possible Lie bialgebra structures supported by $\l$ in terms of Lie bialgebra structures
 on $\g$, and we obtain in that way non factorisable examples for reductive Lie algebras.
Although its importance, the classification problem of Lie bialgebras remain a wide open 
problem. In the opposite side of simple or semi simple case (e.g. solvable or nilpotent) 
there almost no results,  and in some cases is 
hopeless for obvious reasons: if the underlying algebra $\g$ is abelian (say $\dim\g<\infty$),
 to classify all Lie bialgebra structures on  $\g$ is the same as the classification of all
 Lie algebra structures on the vector space $\g^*$.
Surprisingly, if $\g=\h_{2m+1}$, the 2m+1-dimensional Heisenberg Lie algebra,
all Lie bialgebra structures on $\h_{2m+1}$ are known from a long time
(see \cite{BS} and \cite{SZ}), but for general nilpotent Lie algebras there is nothing else.
The nilpotent case is of  special importance 
because after  a result of
Etingof and Gelaki
(see  \cite{EG}, we thank Milen Yakimov for pointing out this
reference), connected Hopf algebras of finite Gelfand Kirillov dimension 
are in bijection with  nilpotent Lie bialgebras.

The origin of this work is the following observation: if $\n$ is a 2-step nilpotent 
Lie algebra with center $\z$, then one can find a linear complement $W$ so that 
$\n=\z\oplus W$. Once we use this (vector space) decomposition, one can see analogies
both in our previous work ($\l=\g\times V$) and in the Heisenberg case 
$\h_{2m+1} =W\oplus k z$. Using this point of view we
obtain, under mild assumptions,  very strong restrictions on all possible co-bracket on a
general 2-step nilpotent algebra. Although these restrictions,
 some nonlinear equations remains to be solved and
the general situation is still wide, but restricted to graph algebras the 
problem is more tractable. Graphs algebras is a very interesting class of 2-step nilpotent
algebras; even if in each dimension there are only a finite number of them, 
they are numerous enough
to construct interesting examples and counterexamples in geometry
(see \cite{LW} and \cite{GGI}).

 The paper is organized as follows:
in section 1 we study general properties of 2-step nilpotent Lie algebras. The main results 
are Theorem \ref{teoTST}, where the Heisenberg case is generalized to arbitrary
 2-step algebras, and Theorem \ref{teoconstruccion} and its reciprocate
\ref{teoCoro}, where the analogy with our previous work \cite{FJ}
is used. In section 2 we specialize to graph algebras. The main results of section 2 are
Theorems \ref{teoLambda2} and
\ref{TSTgraph} where the two algebraic nice hypothesis needed in section 1
 are translated in terms of the combinatorics of the graph.
Section 3 deals with an application of a special class Lie bialgebras, the ones whose
co-bracket  annihilates the center (these class largely include the coboundary ones).
Inside this class we study  $\f_n$, the free 2-step nilpotent algebra on $n$ generators,
that in terms of  graph corresponds to the complete graph $K_n$.

{\bf Acknowledgements:}
We would like to thank Leandro Cagliero for several discussion on 2-step nilpotent algebras
and his help on clarifying equations at the begining of the project. We would also like to thank
 Moira Chas and 
 Dennis Sullivan  for a generous invitation to 
 Stony Brook where we finish the
second part of this work.

\section{Two-step nilpotent Lie bialgebras}
A lie algebra $\n$ is called 2-step nilpotent if
$[\n,[\n,\n]]=0$, or equivalently if $[n,\n]\subseteq\z$, where $\z$ is the center of
$\n$.
As vector space, the center admits a linear complement; we choose one and call it $W$, so 
that
$\n=W\oplus \z$ (as linear vector spaces).
We will study Lie cobrackets $\delta$ defined on the underlying Lie algebra $\n$.
If  $(\n,\delta)$ is a Lie bialgebra structure, $\delta:\n\to\Lambda^2\n$, 
hence it admits a decomposition  via
\[
\n=W\oplus \z \to \Lambda^2(W\oplus\z)=\Lambda^2 W\oplus W\wedge\z\oplus\Lambda^2\z
\]
So, we may split $\delta$ in several components. In particular,
define $\delta_1:=\delta|_W^{\Lambda^2 W}$, that is, for $v\in W$,
$\delta_1(v)$ is the 
${\Lambda^2 W}$ - component of $\delta(v)$.

A very important and classical example is the Heisenberg Lie algebra
 $\n=\h_{2n+1}=W\oplus kz$ where $W=k^{2n}$ with symplectic form
$\om$ and bracket given by
\[
[v+\lambda z,w+\mu z]=\om(v,w)z
\]
For the Heisenberg Lie algebra, all possible cobrackets are well-known
\begin{teo}(\cite{SZ} and \cite{BS})
If  $\delta(z)=v_0\wedge z$ then 
\[
\begin{array}{rccc}
\delta(v)&=&\frac14(\om(v_0,v)\wh\om+v_0\wedge v)& \in\Lambda^2W\\
&&+D(v)\wedge z&\in W\wedge z
\end{array}\]
where $D$ is a coderivation w.r.t. $\delta_1$.
\end{teo}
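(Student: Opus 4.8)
The plan is to use the two defining conditions of a Lie bialgebra — the 1-cocycle condition and co-Jacobi — systematically, exploiting the decomposition $\h_{2n+1}=W\oplus kz$ together with the fact that $z$ spans the center and $[W,W]=kz$. First I would write $\delta(v)=\delta_1(v)+D(v)\wedge z$ for $v\in W$, where $\delta_1(v)\in\Lambda^2W$ and $D\colon W\to W$ is linear (there is no $\Lambda^2\z$ component since $\z$ is one-dimensional), and similarly record the hypothesis $\delta(z)=v_0\wedge z$. The key structural input is the 1-cocycle identity applied to the only nontrivial bracket: for $v,w\in W$ with $[v,w]=\om(v,w)z$, one gets
\[
\om(v,w)\,\delta(z)=[\delta v,w]+[v,\delta w]=\ad_w^{(2)}(\delta v)+\ad_v^{(2)}(\delta w),
\]
where $\ad$ acts as a derivation on $\Lambda^2\n$. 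Since $[W,z]=0$ and $[W,W]\subseteq kz$, the bracket of $\ad_w$ with $\delta_1(v)\in\Lambda^2W$ lands in $W\wedge z$, and with $D(v)\wedge z$ it vanishes. Matching the $W\wedge z$ components on both sides yields an equation expressing $\om(v,w)v_0\wedge z$ in terms of $\delta_1$; unwinding it forces $\delta_1(v)$ to have exactly the stated form $\frac14(\om(v_0,v)\wh\om+v_0\wedge v)$ up to the normalisation, because the map $\delta_1$ must be a $1$-cocycle for the abelian-on-$W$ bracket twisted by the symplectic pairing, and $\wh\om\in\Lambda^2W$ (the element dual to $\om$) is essentially the unique $\so(\om)$-ish invariant available.

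Next I would feed this candidate $\delta_1$ back and use the 1-cocycle condition once more — now looking at the $\Lambda^2W$ component of $\om(v,w)\delta(z)$, which is zero — to check consistency and to pin the coefficient $\tfrac14$: this is where the precise constant is extracted, from the identity $[\wh\om,w]$-type computations (contracting $\wh\om$ against $\ad_w$) producing $v\wedge z$ terms that must cancel the $v_0\wedge v$ contribution. Then co-Jacobi enters: applying the co-Jacobi identity to $z$ gives $\delta(v_0)\wedge z - v_0\wedge\delta(z)=0$, and since $v_0\wedge\delta(z)=v_0\wedge v_0\wedge z=0$, this says $\delta_1(v_0)\wedge z=0$, i.e. $\delta_1(v_0)=0$; one checks the formula indeed gives $\delta_1(v_0)=\frac14(\om(v_0,v_0)\wh\om+v_0\wedge v_0)=0$, consistent. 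Applying co-Jacobi to a general $v\in W$ then produces the condition that $D$ be a coderivation with respect to $\delta_1$: the mixed $\Lambda^2W\wedge z$ part of $\delta(\delta v)$ reads $\delta_1(Dv)\wedge z - (D\wedge\id)\delta_1(v)\wedge z$ (plus symmetric terms), and its vanishing in $\Lambda^3\n$ is precisely the statement $\delta_1\circ D = (D\otimes\id+\id\otimes D)\circ\delta_1$ after antisymmetrisation, i.e. $D$ is a $\delta_1$-coderivation. No further constraint on $D$ survives, since the purely $W\wedge z\wedge z$ and $\Lambda^3 z$ components are automatically zero in the Heisenberg algebra.

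The main obstacle, I expect, is the first step: extracting from the single cocycle equation that $\delta_1(v)$ must be *exactly* $\frac14(\om(v_0,v)\wh\om+v_0\wedge v)$ and nothing more — i.e. showing there is no freedom. This requires understanding the operator $v\mapsto$ "$W\wedge z$-part of $\ad_v^{(2)}$ applied to a bivector", identifying its image and kernel, and verifying that the equation $\om(v,w)v_0\wedge z = (\text{that operator on }\delta_1(v),\delta_1(w))$ has the affine space of solutions of the claimed one-parameter-family form, with the homogeneous part (cocycles valued in $\Lambda^2W$ with zero ad-image) contributing nothing new. Concretely this amounts to a linear-algebra computation with $\wh\om$ and the symplectic form; once the operator is diagonalised against a symplectic basis the constant $\tfrac14$ drops out, but keeping track of the combinatorial factors from $\wh\om=\sum e_i\wedge f_i$ is the delicate point. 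Everything after that — verifying co-Jacobi reduces to the coderivation condition — is a routine, if lengthy, bookkeeping in $\Lambda^3\n$.
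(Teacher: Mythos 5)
A preliminary remark: the paper does not prove this theorem at all --- it quotes it from \cite{SZ} and \cite{BS} --- so the only in-paper analogue to measure against is the proof of Theorem \ref{teoTST}, which generalizes one direction of it. Your strategy (split $\delta(v)=\delta_1(v)+D(v)\wedge z$, feed the only nontrivial bracket $[v,w]=\om(v,w)z$ into the cocycle identity, then use co-Jacobi to constrain $D$) is the right one and matches that proof in spirit, but the heart of the statement is exactly the step you defer as ``a linear-algebra computation'': proving that the cocycle equation determines $\delta_1$ with \emph{no} residual freedom, i.e.\ that the homogeneous system $\ad_v\delta_1(w)=\ad_w\delta_1(v)$ for all $v,w\in W$ forces $\delta_1=0$. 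Your heuristic that $\wh\om$ is ``essentially the unique invariant available'' is not a proof: nothing forces $\delta_1$ to be equivariant under any group, so no invariant-theoretic shortcut applies. The missing argument is short and is precisely the symmetry juggling the paper uses for Theorem \ref{teoTST}: for $\xi\in\Lambda^2W$ write $\ad_w\xi=c_\xi(w)\wedge z$ and set $\tau(v,w,u):=\om\bigl(c_{\delta_1(v)}(w),u\bigr)$; then $\tau(v,\cdot,\cdot)$ is antisymmetric (it is the $2$-form corresponding to $\delta_1(v)$ under $\om$), while the homogeneous equation says $\tau$ is symmetric in its first two arguments, and a trilinear form symmetric in the first two slots and antisymmetric in the last two vanishes identically; nondegeneracy of $\om$ then gives $\delta_1=0$. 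Only after this does the verification that $\frac14(\om(v_0,v)\wh\om+v_0\wedge v)$ solves the inhomogeneous equation (with the normalisation of $\wh\om$ used by the cited sources, which is where the constant $\frac14$ lives --- you never actually pin it down) yield the stated formula.

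Second, your co-Jacobi bookkeeping is incomplete. Since $\delta(z)=v_0\wedge z\neq 0$, applying co-Jacobi to $v\in W$ produces in the $\Lambda^2W\wedge z$-component not only $\delta_1(Dv)\wedge z$ and the terms $-(D\wedge\id+\id\wedge D)\delta_1(v)\wedge z$, but also the contribution $-D(v)\wedge\delta(z)=-D(v)\wedge v_0\wedge z$ coming from the $z$-leg of $D(v)\wedge z$; so what co-Jacobi literally gives is $\delta_1(Dv)=(D\wedge\id+\id\wedge D)\delta_1(v)+D(v)\wedge v_0$, together with a $\Lambda^3W$-component asserting co-Jacobi for $\delta_1$ itself and, from co-Jacobi at $z$, the condition $\delta_1(v_0)=0$ that you did record. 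Your claim that the outcome is ``precisely'' the naive coderivation identity is therefore not justified as written: either the extra term must be dealt with (it disappears exactly when $v_0=0$, i.e.\ in the nearly coboundary situation the paper later isolates), or one must explain how the conventions of \cite{SZ} and \cite{BS} absorb it into their notion of coderivation. As it stands, the second half of your argument does not establish the asserted property of $D$.
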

In particular, $\delta_1$ {\bf is determined} by $\delta(z)$, and 
if $\delta(\z)=0$ then $\delta(v)=D(v)\wedge z\ \forall v\in W$.
We will generalize one direction of the above theorem. We begin with a simple remark
\begin{rem} If $\delta:\n\to \Lambda^2\n$ is a 1-cocycle
then
$\delta(\z)\subset (\Lambda^2\n)^\n$.
\end{rem}
\begin{proof} For $x\in \n, z\in\z$, 
$\ad _z\delta x=[z,x_1]\wedge x_2+x_1\wedge [z,x_2]=0$,
hence
\[
0=\delta (0)=\delta [x,z]=\ad _x\delta z-\ad _z\delta x= \ad _x\delta z+0
\]
That is,  $\delta z$ is ad$_x$-invariant for any $x\in\n, z\in\z$.
\end{proof}

\begin{coro} If $(\Lambda^2\n)^\n=\Lambda^2\z$ then
$\delta (\z)\subset\Lambda^2\z$.
\end{coro}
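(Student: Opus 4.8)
The corollary follows immediately from the preceding remark. The plan is to combine the remark, which asserts $\delta(\z)\subset(\Lambda^2\n)^\n$ for any $1$-cocycle $\delta$, with the hypothesis $(\Lambda^2\n)^\n=\Lambda^2\z$. Since $(\n,\delta)$ is a Lie bialgebra, $\delta$ is in particular a $1$-cocycle, so the remark applies verbatim and yields $\delta(\z)\subset(\Lambda^2\n)^\n$.

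Substituting the hypothesis $(\Lambda^2\n)^\n=\Lambda^2\z$ into this inclusion gives $\delta(\z)\subset\Lambda^2\z$, which is exactly the conclusion. There is essentially nothing further to carry out: the argument is a one-line chain of inclusions $\delta(\z)\subset(\Lambda^2\n)^\n=\Lambda^2\z$.

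There is no real obstacle here; the corollary is a direct specialization of the remark under an extra assumption on the invariants of $\Lambda^2\n$. The only thing worth noting is that the hypothesis $(\Lambda^2\n)^\n=\Lambda^2\z$ is the genuine input — it is what makes the (always true) containment $\delta(\z)\subset(\Lambda^2\n)^\n$ informative, by pinning down the ambient invariant space as precisely $\Lambda^2\z$. In the main development one would then want to understand when this hypothesis holds (e.g. translate it into graph-combinatorial terms, as announced for Section 2).
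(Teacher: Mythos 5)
Your argument is correct and is exactly the intended one: the paper leaves this corollary without a written proof precisely because it is the immediate combination of the preceding remark ($\delta(\z)\subset(\Lambda^2\n)^\n$ for any $1$-cocycle) with the hypothesis $(\Lambda^2\n)^\n=\Lambda^2\z$. Nothing further is needed.
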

Even if the Heisenberg Lie algebra does not satisfy
 $(\Lambda^2\n)^\n=\Lambda^2\z$, several interesting
2-step nilpotent Lie algebras do,  as we sill show in next section.
Now the important simplification will be given by  next theorem. We need to introduce
some notation.

For $v,w\in W$, $[v,w]\in\z$, fix a basis $\{z_i\}_{i=1}^{\dim \z}$, and write 
$$[v,w]=\sum _i T_i(v)(w)z_i$$
where $T_i:W\to W^*$ is determined by the Lie bracket.

 Denote by $[\ ,\ ]^*:W^*\times W^*\to W^*$ the dual bracket, i.e. if
$\delta _1(v)=v_1\wedge v_2$ then  $[\lambda,\mu]^*(v)=\lambda(v_1) \mu(v_2)$.
If $\{\lambda_j\}_{j=1}^{\dim W}$ is a basis of $W^*$, 
define $S_j:W^*\to W\cong W^{**}$ so that
\[
 [\lambda,\mu]^*=\sum_jS_j(\mu)(\lambda) \lambda_j
\]

\begin{teo}\label{teoTST}  Let $(\n,\delta)$ be a 2-step nilpotent Lie bialgebra,  then
\begin{itemize}
\item $\delta_1$ is a Lie coalgebra structure on $W$.

\item Consider
$\{\lambda_j\}_{j=1}^{\dim W}$,  $\{\z_i\}_{i=1}^{\dim \z}$ basis of $W^*$ and $\z$, resp. 
and  $T_i$ and $S_j$  as before. If in addition $\delta(\z)\subset\Lambda^2\z$,
then 
\[
T_iS_jT_k+T_kS_jT_i=0 \ \forall i,j,k
\]
\end{itemize}
\end{teo}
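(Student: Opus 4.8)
The plan is to extract both conclusions from the two defining axioms of a Lie bialgebra — the 1-cocycle condition and co-Jacobi — after restricting to the subspace $W$ and projecting onto the appropriate graded pieces of $\Lambda^2\n = \Lambda^2 W \oplus W\wedge\z \oplus \Lambda^2\z$.

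First I would handle $\delta_1$. The 1-cocycle condition says $\delta[v,w] = [\delta v, w] + [v,\delta w]$ for $v,w\in W$; since $[v,w]\in\z$ and (after the hypothesis $\delta(\z)\subset\Lambda^2\z$) $\delta[v,w]\in\Lambda^2\z$, this has no $\Lambda^2 W$-component. Writing $\delta v = \delta_1(v) + (\text{terms in }W\wedge\z) + (\text{terms in }\Lambda^2\z)$, I compute $[\delta v,w]$ using that $[\z,\n]=0$, so only $\delta_1(v)$ contributes and produces something in $W\wedge\z$ (a wedge of $[v_1,w]\in\z$ with $v_2\in W$, etc.). Hence the $\Lambda^2 W$-component of the cocycle identity is automatically $0=0$ and gives nothing; the content is that co-Jacobi for $\delta$, projected onto $\Lambda^3 W$, reads exactly co-Jacobi for $\delta_1$. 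Indeed $\delta$ applied to $v\in W$ lands in $\Lambda^2\n$; applying $\delta$ again and projecting the co-Jacobi relation $\delta(v_1)\wedge v_2 - v_1\wedge\delta(v_2)=0$ onto $\Lambda^3 W$, the only surviving contributions come from the $\Lambda^2 W$-components, i.e. from $\delta_1$. This yields that $\delta_1\colon W\to\Lambda^2 W$ satisfies co-Jacobi, so it is a Lie coalgebra structure on $W$. (Note this half does not need $\delta(\z)\subset\Lambda^2\z$, consistent with the statement.)

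For the second bullet I would feed the extra hypothesis into the 1-cocycle identity and look at the $W\wedge\z$-component. Fix $v,w\in W$. On one side, $\delta[v,w]\in\Lambda^2\z$ contributes nothing to $W\wedge\z$. On the other side, $[\delta v,w] + [v,\delta w]$: the $W\wedge\z$-part comes from pairing the $\Lambda^2 W$-component $\delta_1$ of $\delta v$ (resp. $\delta w$) against $w$ (resp. $v$) via the bracket, using the formula $[v',w]=\sum_i T_i(v')(w)z_i$. So with $\delta_1(v)=v_1\wedge v_2$ one gets $[\delta_1(v),w] = \sum_i\big(T_i(v_1)(w)\,v_2 - T_i(v_2)(w)\,v_1\big)\wedge z_i$, and similarly for the $w$-term. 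Setting the total $W\wedge\z$-component to zero, for each fixed $i$, gives a relation of the form (schematically) $\sum\big(T_i(v_1)(w)v_2 - T_i(v_2)(w)v_1\big) + \sum\big(T_i(w_1)(v)w_2 - T_i(w_2)(v)w_1\big) = 0$ in $W$, valid for all $v,w\in W$.

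The final step is to rewrite this $W$-valued identity using the dual bracket $[-,-]^*$ and the operators $S_j$. The key point is that $\delta_1(v)=v_1\wedge v_2$ encodes $[-,-]^*$, and expressions like $\sum v_2\, T_i(v_1)(w)$ are precisely $S_j$ composed with $T_i$: applying a functional $\lambda_k\in W^*$ and using $[\lambda_k,\mu]^* = \sum_j S_j(\mu)(\lambda_k)\lambda_j$ together with $T_i\colon W\to W^*$, one identifies $\sum v_2\,\lambda(v_1)$ with the action of $S_\bullet$. Contracting the $W$-valued relation above against an arbitrary $\lambda_k\in W^*$ and an arbitrary input, and matching terms, the relation becomes exactly $T_i S_j T_k + T_k S_j T_i = 0$ for all $i,j,k$ — the symmetrization in $i,k$ coming from the symmetrization in $v,w$ in the cocycle identity, and the index $j$ ranging over the basis of $W^*$ that indexes $\delta_1$.

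I expect the main obstacle to be purely bookkeeping: correctly tracking which graded component each term of $[\delta v, w]$ lands in, and getting the index gymnastics right when translating the $W$-valued identity into a composition of the three linear maps $T_i$, $S_j$, $T_k$ (in particular verifying that $S_j$ is exactly the "adjoint-transpose" packaging of $\delta_1$ that makes $T_i S_j T_k$ act on $W$). No deep idea is needed beyond projecting the two bialgebra axioms onto the graded pieces, but the identification $\sum v_2\,\lambda(v_1)\leftrightarrow S$ must be set up carefully so the antisymmetry of $\wedge$ and the symmetrization over $v,w$ combine to give the stated symmetric (not antisymmetric) relation in $i$ and $k$.
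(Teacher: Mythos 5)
Your treatment of the first bullet and of the first half of the second bullet matches the paper's: you project co-Jacobi onto $\Lambda^3W$ to get co-Jacobi for $\delta_1$, and you project the cocycle identity for $[u,v]$ onto $W\wedge\z$ (using $\delta[u,v]\in\Lambda^2\z$ and that the bracket kills the $\z$-legs) to get $[\delta_1u,v]+[u,\delta_1v]=0$, which is exactly the paper's intermediate identity. One small caveat in the first bullet: to say that ``only the $\Lambda^2W$-components survive'' in $\Lambda^3W$ you must rule out contributions of the form $D^i(v)\wedge\delta(z_i)$, i.e.\ you need to know that $\delta(z)$ has no $\Lambda^2W$-component; this is not automatic from bookkeeping but follows from the remark preceding the theorem ($\delta(\z)\subseteq(\Lambda^2\n)^\n$ together with $(\Lambda^2\n)^\n\subseteq W\wedge\z\oplus\Lambda^2\z$), which you should invoke.

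The genuine gap is the final step of the second bullet. Writing $v_\phi:=\phi[v,-]$, the identity you reach is $u_1\,v_\phi(u_2)=v_1\,u_\phi(v_2)$, and contracting it with any $\psi\in W^*$ gives $[\psi,v_\phi]^*(u)=[\psi,u_\phi]^*(v)$: both terms carry the \emph{same} central index (the same $\phi$, hence the same $T_i$), and the symmetry is in the vectors $u,v$, not in the two central indices. If you specialize $\psi=T_j(x)$ and expand in the $S_k$'s you obtain $\sum_k\lambda_k(u)\,(T_iS_kT_j)(x)(v)=\sum_k\lambda_k(v)\,(T_iS_kT_j)(x)(u)$, which is not the system $T_iS_kT_j+T_jS_kT_i=0$; so ``matching terms'' does not finish, and your claim that the symmetrization in $i,k$ ``comes from the symmetrization in $v,w$'' is incorrect. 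The missing idea is the paper's alternation argument: specialize the contraction to $\psi=w_\phi$, so that all three slots are of the form $x_\phi$, and use the derived symmetry together with the antisymmetry of $[\cdot,\cdot]^*$ three times,
\[
[w_\phi,v_\phi]^*(u)=-[v_\phi,w_\phi]^*(u)=-[v_\phi,u_\phi]^*(w)=[u_\phi,v_\phi]^*(w)=[u_\phi,w_\phi]^*(v)=-[w_\phi,u_\phi]^*(v),
\]
which compared with $[w_\phi,v_\phi]^*(u)=[w_\phi,u_\phi]^*(v)$ forces $[w_\phi,u_\phi]^*(v)=0$ for all $u,v,w\in W$, $\phi\in\z^*$. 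Only after this vanishing does the coordinate expansion, followed by polarization in $\phi$ (first $\phi=z^{i_0}$, then $\phi=z^{i_0}+z^{j_0}$), yield $T_{i_0}S_kT_{j_0}+T_{j_0}S_kT_{i_0}=0$. Without this step (or an equivalent argument) your proposal does not prove the second bullet.
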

For instance, in the Heisenberg Lie algebra there is only on $T$, corresponding to the symplectic form $\om$, which is non degenerate, so the equation
$TS_jT=0 \ \forall j$ clearly implies $\delta_1=0$.

\begin{proof} We need to check co-Jacobi for $\delta_1$.
If $z\in \z$ then $\delta(z)\in(\Lambda^2\n)^\n$ and one can check that we always have
\[
(\Lambda^2\n)^\n\subseteq W\wedge\z\oplus\Lambda^2\z
\]
that is, $\delta(z)$ do has zero component in $\Lambda^2 W$.
Now if
if $v\in W$, $\delta(v)\in\Lambda^2W\oplus W\wedge \z\oplus\Lambda^2\z$ and we can write
\[
\delta (v)=\delta_1(v)\hbox{ Mod }(W\wedge\z+\Lambda^2\z)
\]
and the component of $\delta(v)$ in $\Lambda^3 W$ can only arise because of $\delta_1$
applied to components of $\delta_1(v)$, since $\delta$ applied to elements in $\z$
give components with at least one "z", that is, components in
$\Lambda^2W\wedge \z + W\wedge\Lambda^2\z+\Lambda^3\z$.

The second part is the most interesting, and this part is a generalization of 
the arguments for the Heisenberg case.

Let $u,v\in W$. Since $\n$ is  2-step nilpotent then  $[u,v]\in\z$, but also because $\delta(\z)\subseteq(\Lambda^2\n)^\n$ and we assume
$(\Lambda^2\n)^\n=\Lambda^2\z$ we have
$\delta[u,v]\in\Lambda^2\z $. But also
\[
\delta[u,v]= [\delta u,v]+ [u,\delta v]
\]
Let us write
 \[
\delta u=
\delta_1 u+\sum_i D^i(u)\wedge z_i +\dwzz  u
\]
(and similarly for $\delta v$) where $\dwzz  u$ is the component in $\Lambda^2\z$
of $\delta u$  and
$D^i:W\to W$ are linear maps describing the $W\wedge\z$-component. Then
$\delta[u,v]=$
\[
= [\delta_1 u+\sum_i D^i(u)\wedge z_i +\dwzz  u,v]+ [u,\delta_1 v+\sum_i D^i(v)\wedge z_i+\dwzz  v]
\]
\[
= [\delta_1 u+\sum_i D^i(u)\wedge z_i ,v]+ [u,\delta_1 v+\sum_i D^i(v)\wedge z_i]
\]

\[
= [\delta_1 u,v]+\sum_i [D^i(u),v]\wedge z_i+ [u,\delta_1 v]+\sum_i [u,D^i(v)]\wedge z_i
\]
\[
=\underbrace{ [\delta_1 u,v]+ [u,\delta_1 v]}_{\in W\wedge \z}
+\sum_i\underbrace{ ([D^i(u),v]+[u,D^i(v)])\wedge z_i}_{\in\Lambda^2\z}
\]
  hence
$
\left\{
\begin{array}{rcl}
0&=&[\delta_1 u,v]+ [u,\delta_1 v]\\
\delta[u,v]&=&\sum_i ([D^i(u),v]+[u,D^i(v)])\wedge z_i
\end{array}
\right.
$

In particular, the first equality holds.
The second identity is not needed now, but it will be used in the proof of the
"general construction" theorem.
In Sweedler - type notation,  $\delta_1u=u_1\wedge u_2$, $\delta_1v=v_1\wedge v_2$,   
we have
\[
0=[u_1,v]\wedge u_2+u_1\wedge [u_2,v]+ [u,v_1]\wedge v_2+v_1\wedge [u,v_2]
\]
\[=
-u_2\wedge [u_1,v]+u_1\wedge [u_2,v]-v_2\wedge [u,v_1]+v_1\wedge [u,v_2]\in W\wedge \z
\]
by antisymmetry of $\delta_1$, we have $v_1\wedge v_2=-v_2\wedge v_1$, so
\[
0=
2u_1\wedge [u_2,v]+2v_1\wedge [u,v_2]\in W\wedge \z
\]
Hence $\forall\ \phi\in\z^*$ we have the cocycle formula
\begin{equation}
\label{cociclo2step}
u_1\phi( [v,u_2])=v_1\phi( [u,v_2])
\end{equation}
Let us introduce the following notation;
 for any $v\in W$ and $\phi\in \z ^*$, denote $v_\phi\in W ^*$ by
\[
v_\phi(w):=\phi[v,w]
\]
In this notation, cocycle formula 
reads
 \[
u_1v_\phi( u_2)=v_1u_\phi(v_2)
\]
For any $w$, apply $w_\phi\in W^*$ and get
 \[
w_\phi(u_1)v_\phi( u_2)=w_\phi(v_1)u_\phi(v_2)
\]
 Recall again $[\ ,\ ]^*:=\delta ^*:W^*\times W^*\to W^*$ is the 
transpose of$\delta_1$, so we re-write the cocycle formula as
 \[
[w_\phi,v_\phi]^*( u)=[w_\phi, u_\phi]^*(v)
\]
Now we use alternatively the antisymmetry of $[\ ,\ ]^*$ and the above formula
and get
 \[
[w_\phi,v_\phi]^*( u)
=
-[v_\phi,w_\phi]^*( u)
=
-[v_\phi,u_\phi]^*( w)
=
[u_\phi,v_\phi]^*( w)
\]
\[=
[u_\phi,w_\phi]^*(v)
=
-[w_\phi,u_\phi]^*(v)
\]
hence
\[
\fbox{$[w_\phi,u_\phi]^*(v)=0$ for all $u,v,w\in W$, 
$\phi\in\z ^*$}
\]
This is almost the end of the proof. Now we simply write this formula using bases.
Recall
\begin{align*}
[\cdot,\cdot]^*&=\sum_{k=1}^{\dim W} \lambda_k\otimes S_k,&& S_i:W^*\to W,\quad S+S^t=0; \\    
[\cdot,\cdot]&=\sum_{i=1}^{\dim\z}  z_i\otimes T_i,&& T_i:W\to W^*,\quad T+T^t=0.
\end{align*}
where $\{z_i\}_{i=1}^{\dim\z}$ is  basis of $\z$ and
$\{\lambda_k\}_{i=k}^{\dim W}$ is basis of $W^*$ \\
Besides
$ u_\phi=\sum_{i=1}^{\dim\z}  \phi(z_i) T_i(u)
$, so
\begin{align*}0=
[u_\phi,v_\phi]^*
&=\sum_{i=1}^{\dim\z}\sum_{j=1}^{\dim\z}\phi(z_i)\phi(z_j)[T_i(u),T_j(v)]^*
 \end{align*}
\begin{align*}&=\sum_{i=1}^{\dim\z}\sum_{j=1}^{\dim\z}\sum_{k=1}^{\dim W} 
\phi(z_i)\phi(z_j)\, T_j(v)\big(S_k(T_i(u))\big)\,\lambda_k  
\end{align*}

$T_j(\!a\!)(\!b\!)\!=\!-\!T_j(\!b\!)(\!a\!)
 \Rightarrow T_j(v)(\!S_k(\!T_i(\!u)\!)\!)\!=\!-\!T_j(S_k(\!T_i(u)\!)\!)(v)$, so
\begin{align*}&=-\sum_{i=1}^{\dim\z}\sum_{j=1}^{\dim\z}\sum_{k=1}^{\dim W} 
\phi(z_i)\phi(z_j)\,  T_j\big(S_k(T_i(u))\big)(v)\,\lambda_k,
\end{align*}
Since $\{\lambda_k\}$ is a basis
\[
0=\sum_{i,j} 
\phi(z_i)\phi(z_j)\,  T_j\big(S_k(T_i(u))\big)(v)  \ \forall k\]
Because it holds for all $u$ and $v$ we have
\[0=\sum_{i,j} 
\phi(z_i)\phi(z_j)   T_j S_k T_i \in \Hom(W,W^*)
\]
In particular, if $\{z^i\}$ denotes the dual basis of $\{z_i\}$, 
 taking $\phi=z^{i_0}$ one gets
\[0=   T_{i_0} S_k T_{i_0}\ \forall k,i_0\]
which is a particular case. Specializing at $\phi=z^{i_0}+z^{j_0}$ (and using the parcticular
case) one can easily get
\[\fbox{$0=  T_{j_0} S_k T_{i_0}+ T_{i_0} S_k T_{j_0}\ \forall i_0,j_0,k$}\]
\end{proof}

\begin{rem}
Since the Lie algebra structure will be fixed and the cobracket is the unknown, one can look
at 
\[
"T_iS_jT_k+T_kS_jT_i=0 \ \forall i,j,k"
\]
as a system of linear equations on $S$. This motivates the following definition
\end{rem}

\begin{defi}
Keeping notation for $\n=W\oplus\z$ and the $T_i$'s, we call $\n$ a 2-step Lie algebra 
of {\bf TST} type if
the system of equations
\[
T_iST_k+T_kST_i=0 \ \forall i,k
\]
 with unknown $S:W^*\to W$ has only the trivial solution $S=0$.
\end{defi}
Examples of TST algebras are the Heisenberg algebras. Also
one can check that
$\f_n$, the free 2-step nilpotent Lie algebra on $n$-generators, is of type TST,
and this is included in a big family of
examples that one can build from graphs.

 \begin{coro} If $(\n,\delta)$ is a Lie bialgebra  of type TST
  and $(\Lambda^2\n)^\n=\Lambda^2\z$  then 
\begin{itemize}
\item $\delta\z\subset\Lambda^2\z$
\item  $\delta W\subset W\wedge \z\oplus  \Lambda^2\z$,
\end{itemize}
\end{coro}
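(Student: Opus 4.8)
The plan is to combine the structural results already obtained with the defining property of TST algebras. First I would invoke the Corollary after the first Remark: since we are assuming $(\Lambda^2\n)^\n=\Lambda^2\z$, that corollary gives immediately $\delta(\z)\subset\Lambda^2\z$, which is the first bullet. This costs nothing new; it is just a restatement of what was already proved.

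For the second bullet, the point is that $\delta W\subset W\wedge\z\oplus\Lambda^2\z$ is equivalent to saying that the $\Lambda^2 W$-component of $\delta$ restricted to $W$, i.e.\ $\delta_1$, vanishes. So the whole task reduces to proving $\delta_1=0$. Here is where Theorem \ref{teoTST} enters: since $\delta(\z)\subset\Lambda^2\z$ (just established), the second item of that theorem applies and yields
\[
T_i S_j T_k + T_k S_j T_i = 0 \qquad \forall\, i,j,k,
\]
where the $S_j:W^*\to W$ are the components of the dual bracket $[\,\cdot\,,\cdot\,]^*=\delta_1^*$ in the chosen bases. But this is precisely the system of equations in the definition of TST type, now with $S=S_j$ for each fixed $j$: for every $j$, the map $S_j$ solves $T_iST_k+T_kST_i=0$ for all $i,k$. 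Since $\n$ is of type TST, the only solution is $S=0$, so $S_j=0$ for all $j$, hence $[\,\cdot\,,\cdot\,]^*=\sum_j \lambda_j\otimes S_j=0$, and therefore $\delta_1=0$ by duality.

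Finally, with $\delta_1=0$ the $\Lambda^2 W$-component of $\delta|_W$ is zero, so $\delta(v)\in W\wedge\z\oplus\Lambda^2\z$ for every $v\in W$, which is the second bullet.

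I do not expect a genuine obstacle here: the statement is essentially a bookkeeping corollary that packages Theorem \ref{teoTST} together with the TST hypothesis. The only point requiring a little care is making sure the quantifier structure matches — namely that "$S_j$ solves the TST system for each fixed $j$" is exactly the hypothesis one is allowed to use, so that one may conclude $S_j=0$ separately for each $j$ rather than needing some joint condition; but this is immediate from the way the definition of TST type is phrased (the equation $T_iST_k+T_kST_i=0$ involves only the indices $i,k$, not $j$).
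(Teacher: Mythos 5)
Your proposal is correct and follows the same route as the paper: the first bullet is the earlier corollary applied with $(\Lambda^2\n)^\n=\Lambda^2\z$, and the second bullet comes from Theorem \ref{teoTST}, since each $S_j$ (for $j$ fixed) solves the TST system $T_iST_k+T_kST_i=0$ and therefore vanishes by the TST hypothesis, forcing $\delta_1=0$. The paper's own proof is a condensed version of exactly this argument, so there is nothing further to add.
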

\begin{proof}
The first part is because 
$\delta\z\subseteq (\Lambda^2\n)^\n$ and the second is 
because the $\Lambda^2W$-component of $\delta|_W$ is $\delta_1$,
that is determined by $S_j$'s verifying the "TST"  system of equations.
\end{proof}

\subsection{General Construction}

\begin{teo}\label{teoconstruccion}
 Assume the following data on $\n=W\oplus \z$ is given:
\begin{itemize}
\item $\dz:\z\to\Lambda^2\z$ a Lie coalgebra structure,
\item a Lie algebra map  $\DD\colon\z^*\to\End(W)$,
$ f\mapsto  -\sum_if(z_i)D^i
$
verifying the following  
\[
\sum_iT_i(x)(y) \dz z_i
=\sum_{i,j}\Big( T_i(D^j(x))(y)+T_i(x)(D^j(y))\Big)z_i\wg z_j
\]
where $D^i=-\DD(z^i)$.

\item $\Phi\colon \Lambda^2\z^*\to W^*$ a 2-cocycle with values in $W^*$
\end{itemize}
then the map $\delta:\n\to\Lambda^2\n$ defined by
\[
\left\{
\begin{array}{rcllccc}
\delta(z)&=&\dz(z)&\hbox{if } z\in\z
\\
\delta(v)&=&\sum_i D^i(v)\wg z_i
+\Phi^*(v)& \hbox{if }v\in W
\end{array}
\right.
\]
is a  Lie bialgebra structure on $\n$.
\end{teo}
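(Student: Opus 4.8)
The plan is to verify the two defining axioms of a Lie bialgebra for the map $\delta$ given in the statement: the co-Jacobi identity and the $1$-cocycle condition. Since $\delta$ is built from the pieces $\dz$, the $D^i$'s, and $\Phi$, each of which carries a ``nice'' property ($\dz$ co-Jacobi, $\DD$ a Lie algebra map, $\Phi$ a $2$-cocycle, plus the compatibility displayed in the hypothesis), the strategy is to expand each axiom into its homogeneous components in the triple grading $\Lambda^3\n=\Lambda^3 W\oplus\Lambda^2W\wg\z\oplus W\wg\Lambda^2\z\oplus\Lambda^3\z$ (for co-Jacobi) and in $\Lambda^2 W\oplus W\wg\z\oplus\Lambda^2\z$ (for the cocycle condition applied to a bracket $[u,v]\in\z$), and to match each component with exactly one of the hypotheses.

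First I would check the cocycle condition $\delta[x,y]=[\delta x,y]+[x,\delta y]$. Both sides vanish unless one of $x,y$ lies in $W$ and, because $[\n,\n]\subseteq\z$, the only nontrivial case is $x=u$, $y=v$ with $u,v\in W$, where $[u,v]=\sum_iT_i(u)(v)z_i\in\z$. On the left, $\delta[u,v]=\sum_iT_i(u)(v)\,\dz z_i\in\Lambda^2\z$. On the right, expand $\delta u=\sum_iD^i(u)\wg z_i+\Phi^*(u)$ and likewise for $v$; using $[\z,\n]=0$, the $\Phi^*$-terms and any term with a $z_i$-factor paired against $W$ die, and one is left precisely with $\sum_i([D^i(u),v]+[u,D^i(v)])\wg z_i=\sum_{i,j}(T_i(D^j(u))(v)+T_i(u)(D^j(v)))\,z_i\wg z_j$. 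Equality of these two expressions is \emph{exactly} the displayed compatibility hypothesis relating $\dz$, the $T_i$ and the $D^j$, so this axiom reduces to a restatement of a given.

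Next, co-Jacobi: write $\delta=\dz\oplus(\sum D^i(-)\wg z_i)\oplus\Phi^*$ and compute $(\delta\ot\id-\id\ot\delta)\delta$ (antisymmetrized) on $z\in\z$ and on $v\in W$ separately. On $\z$ the grading forces everything into $\Lambda^3\z$, where the identity is precisely co-Jacobi for $\dz$. On $v\in W$, the $\Lambda^3 W$-component vanishes because $\delta_1=0$ here (there is no $\Lambda^2W$-part of $\delta|_W$, by construction); the $\Lambda^2W\wg\z$-component is automatically zero for the same reason; the $W\wg\Lambda^2\z$-component collects the ``$\DD$ is a Lie algebra homomorphism'' condition together with the displayed compatibility (the cross-terms between $\sum D^i(v)\wg z_i$ and $\dz$, and between two $D$-layers); and the $\Lambda^3\z$-component is where $\Phi$ being a $2$-cocycle for the $\z^*$-action (equivalently $\Phi^*$ a $\z$-module coboundary-free $2$-cocycle into $W^*$, transposed) enters. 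Assembling these, co-Jacobi for $\delta$ is equivalent to the conjunction of the three stated properties.

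The main obstacle I anticipate is purely bookkeeping: correctly tracking signs and the wedge-reordering in the $W\wg\Lambda^2\z$-component of co-Jacobi on $v\in W$, where one must disentangle the contribution of $(\dz\ot\id)(\sum D^i(v)\wg z_i)$ from that of $(\id\ot\sum D^j(-)\wg z_j)(\sum D^i(v)\wg z_i)$ and see that their sum is governed by $\DD([f,g])=[\DD f,\DD g]$ plus the compatibility equation, with no residual term. Once the grading is fixed and each component is isolated, no genuinely new computation is needed beyond what already appears (with opposite logical direction) in the proof of Theorem~\ref{teoTST}; the present theorem is essentially its converse, so the verification is a matter of running those same manipulations backwards and checking that the hypotheses supplied are exactly the ones that make each graded piece vanish.
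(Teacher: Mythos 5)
Your proposal is correct and takes essentially the same approach as the paper: the paper's proof of Theorem \ref{teoconstruccion} is literally ``Straightforward checking,'' and your component-by-component verification (the 1-cocycle condition reducing, for $u,v\in W$, to the displayed compatibility between $\dz$, the $T_i$ and the $D^j$, and co-Jacobi split along the grading of $\Lambda^3\n$, matching $\dz$ co-Jacobi on $\Lambda^3\z$ over $\z$, the homomorphism property of $\DD$ on $W\wg\Lambda^2\z$, and the 2-cocycle property of $\Phi$ on $\Lambda^3\z$ over $W$) is exactly that check made explicit. One minor slip: since co-Jacobi involves only $\delta$ and not the bracket, the displayed compatibility (which contains the $T_i$) plays no role in the $W\wg\Lambda^2\z$-component — that piece is governed solely by $\DD$ being a Lie algebra map — but this does not affect the sufficiency argument you outline.
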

\begin{proof}
Straightforward checking.
\end{proof}
Now TST condition is a useful tool because one can easyly prove the following

\begin{teo}\label{teoCoro}
 If $\n$ is of type TST and $(\Lambda^2\n)^\n=\Lambda^2\z$ then 
{\bf all} Lie bialgebra structures on $\n$ are as in the previous Theorem.
\end{teo}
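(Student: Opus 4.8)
The plan is to start from an arbitrary Lie bialgebra $(\n,\delta)$, read off a candidate triple $(\dz,\DD,\Phi)$ as in Theorem~\ref{teoconstruccion}, and then match the two defining identities of $\delta$ — co-Jacobi and the $1$-cocycle condition — component by component against the three properties required there. First, apply the Corollary following the definition of TST: because $\n$ is of type TST and $(\Lambda^2\n)^\n=\Lambda^2\z$, one has $\delta\z\subseteq\Lambda^2\z$ and $\delta W\subseteq W\wg\z\oplus\Lambda^2\z$, so in particular $\delta_1=0$. Thus $\delta$ already has the shape of Theorem~\ref{teoconstruccion}: put $\dz:=\delta|_\z\colon\z\to\Lambda^2\z$; let $D^i\colon W\to W$ record the $W\wg z_i$-component of $\delta|_W$; set $\DD(f):=-\sum_if(z_i)D^i$ so that $D^i=-\DD(z^i)$; and let $\Phi\colon\Lambda^2\z^*\to W^*$ be the transpose of the $\Lambda^2\z$-component of $\delta|_W$, so that this component is $\Phi^*$. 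It then remains to verify the three bulleted hypotheses.

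For the first hypothesis, evaluate co-Jacobi of $\delta$ at $z\in\z$: since $\delta|_\z=\dz$ has image in $\Lambda^2\z$, the $\Lambda^3\z$-part of the co-Jacobiator of $\delta$ at $z$ is literally the co-Jacobiator of $\dz$ at $z$, so $\dz$ is a Lie coalgebra structure on $\z$. For the third hypothesis, feed the $1$-cocycle identity $\delta[x,y]=[\delta x,y]+[x,\delta y]$ with $x,y\in W$: this is exactly the computation already performed in the proof of Theorem~\ref{teoTST}, where with $\delta_1=0$ the $W\wg\z$-part vanishes identically and the $\Lambda^2\z$-part gives $\dz[x,y]=\sum_i([D^i(x),y]+[x,D^i(y)])\wg z_i$; substituting $[a,b]=\sum_kT_k(a)(b)z_k$ and $\dz[x,y]=\sum_iT_i(x)(y)\dz(z_i)$ turns this into precisely the compatibility equation of Theorem~\ref{teoconstruccion}. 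The remaining instances of the $1$-cocycle identity ($x\in W,\,y\in\z$, and $x,y\in\z$) hold automatically, since then $[x,y]=0$ and every elementary bracket on the right-hand side has one argument in $\z$ and therefore vanishes.

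The second hypothesis, and the part of the third not yet used, come from co-Jacobi of $\delta$ at $v\in W$. Writing $\delta(v)=\sum_iD^i(v)\wg z_i+\Phi^*(v)$ and reapplying $\delta$, the fact that $\delta_1=0$ kills the $\Lambda^3W$- and $\Lambda^2W\wg\z$-parts of the co-Jacobiator, leaving a $W\wg\Lambda^2\z$-part, namely $\sum_{i,j}D^jD^i(v)\wg z_j\wg z_i-\sum_iD^i(v)\wg\dz(z_i)=0$, and a $\Lambda^3\z$-part, namely $\sum_i\Phi^*(D^i(v))\wg z_i$ plus the co-Jacobiator of $\dz$ evaluated on $\Phi^*(v)$, again $=0$. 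Dualizing the $\z$-slots in the first of these, and using the identifications $W\cong W^{**}$, $\z\cong\z^{**}$, yields exactly $\DD([f,g])=[\DD f,\DD g]$ for $f,g\in\z^*$, i.e.\ $\DD$ is a Lie algebra map. Dualizing the second, the summand $\sum_i\Phi^*(D^i(v))\wg z_i$ becomes the ``action'' terms of a Chevalley--Eilenberg cochain differential — with $\z^*$ acting on $W^*$ through $\DD$ by $(f\cdot\phi)(w)=-\phi(\DD(f)w)$, the module structure for which ``$\DD$ is a Lie map'' is equivalent to ``$W^*$ is a module'' — while the $\dz$-co-Jacobiator term becomes the $\Phi([\,\cdot\,,\cdot],\cdot)$ terms; so the identity is exactly $d\Phi=0$, i.e.\ $\Phi$ is a $2$-cocycle with values in $W^*$. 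Once all three hypotheses are verified, $(\dz,\DD,\Phi)$ is admissible data for Theorem~\ref{teoconstruccion}, and the $\delta$ produced there is the $\delta$ we started with; combined with Theorem~\ref{teoconstruccion} itself this shows the construction exhausts all Lie bialgebra structures on $\n$.

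I expect the genuine obstacle to be purely the bookkeeping of the previous paragraph: one must split $\Lambda^3\n=\Lambda^3W\oplus\Lambda^2W\wg\z\oplus W\wg\Lambda^2\z\oplus\Lambda^3\z$ cleanly, track the signs produced by the wedge products and the canonical dualities, and — the one place where a wrong choice would derail things — fix the correct $\z^*$-module structure on $W^*$, so that the $W\wg\Lambda^2\z$-piece reads as ``$\DD$ is a homomorphism'' and the $\Lambda^3\z$-piece as ``$\Phi$ is a $2$-cocycle''. Everything else is immediate from the Corollary on TST algebras and from the identities already extracted in the proof of Theorem~\ref{teoTST}.
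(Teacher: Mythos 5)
Your proposal is correct and follows the same route as the paper: invoke the TST corollary to force $\delta\z\subseteq\Lambda^2\z$ and $\delta W\subseteq W\wg\z\oplus\Lambda^2\z$ (so $\delta_1=0$), read off $\dz$, the $D^i$'s and $\Phi$, and check that co-Jacobi and the $1$-cocycle condition reproduce exactly the three hypotheses of Theorem~\ref{teoconstruccion}. The paper dismisses this verification as straightforward checking, whereas you carry it out explicitly (and your componentwise attribution of which identity yields which hypothesis is in fact the more precise bookkeeping), so no gap remains.
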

\begin{proof}
Last corollary says that necesarily
$\delta\z\subset\Lambda^2\z$ and $\delta W\subset W\wedge \z\oplus  \Lambda^2\z$.
If one write $\delta$ in terms of arbitary maps $\dz$, $D^j$'s and $\Phi$, then
it is a straightforward checking that co-Jacobi condition
implies first and third item of Theorem \ref{teoconstruccion} and the 1-cocycle
condition implies the second item.
\end{proof}

\section{Graph algebras}
Let $G=(V,A)$ be an oriented simple graph without loops.
The graph algebra $\n(G)$ associated to a graph $G$ is defined in the following way:
 for each $i,j\in V$
and $\alpha\in A$ going from $i$ to $j$ we set
\[
[e_i,e_j]:=\alpha
\]
Since the isomorphism class do not depends on the orientation 
(just change $\alpha$ by $-\alpha$), 
sometimes we will assume that $G$ is unoriented, but the set of
vertices is ordered, so that an edge joining two vertices can be oriented,
considering that it goes from the smaller to the bigger.

\begin{exs}
\begin{enumerate}
\item the graph
$\xymatrix{
x\ar[r]^Z&y
 }$ gives the 3-dimensional Lie algebra with basis $\{x,y,z\}$ and bracket $[x,y]=z$, that is,
the Heisenberg algebra $\h_3$. On the other hand, for $n>1$ the Heisenberg Lie algebra
$\h_{2n+1}$ is {\em not} a graph algebra.
\item $\f_n$: the free 2-step nilpotent Lie algebra is the graph algebra associated to
the complete graph $K_n$.
\item The nontrivial brackets of the Lie algebra associated to the following graph are
\[
\xymatrix{
&e_2\ar[d]^\alpha\\
e_1\ar[r]_\beta\ar[ru]^\gamma&e_3&e_4\ar^\rho[l]
 }
\]
\[
\begin{array}{cc}
{}[e_1,e_2]=\gamma,&[e_1,e_3]=\beta,\\
{}[e_2,e_3]=\alpha,&[e_4,e_3]=\rho.
\end{array}
\]
\end{enumerate}
\end{exs}
Notice that the center of a graph algebra has natural basis consisting on the
arrows and the isolated vertices. If we assume that the graph does not have isolated vertices
then 
we have canonical decomposition and basis:
\[
W=\bigoplus_{e\in V} ke, \ \z= \bigoplus_{\alpha\in A} k\alpha
\]
and $\n(G)=W\oplus\z$ is a 2-step nilpotent  with center $\z$
and linear complement $W$.

We begin by characterizing the condition $(\Lambda^2\n)^\n=\Lambda^2\z$.
Recall that for a vertex $e$,  the degree - or valency- $|e|$ is the number
 of edges incident to $e$.

\begin{lem}  Consider a graph $G=(V,A)$ 
such that there exists $e\in V$ with $|e|=1$ then there exists an element $0\neq \om\in (W\wedge \z)^\n$. 
\end{lem}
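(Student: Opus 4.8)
The plan is to exhibit an explicit element of $(W\wedge\z)^{\n}$ and then verify its $\n$-invariance by a short computation. Since $|e|=1$, there is a unique edge $\alpha\in A$ incident to $e$; say $\alpha$ joins $e$ to $e'$, so that $[e,e']=\pm\alpha$ (orient, say, from $e$ to $e'$, giving $[e,e']=\alpha$) and $[e,f]=0$ for every other vertex $f\neq e'$. The natural candidate is $\om := e\wedge\alpha\in W\wedge\z$. Since $\alpha\in\z$, it remains to check that $\ad_x(e\wedge\alpha)=[x,e]\wedge\alpha + e\wedge[x,\alpha]=[x,e]\wedge\alpha = 0$ for all $x\in\n$.

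By bilinearity it suffices to check this on the basis $\{f : f\in V\}\cup\{\beta : \beta\in A\}$ of $\n$. For $x=\beta\in\z$ one has $[\beta,e]=0$, so the bracket vanishes. For $x=f$ a vertex, $[f,e]$ is either $0$ (if $f=e$, or if $f$ is not adjacent to $e$) or $\pm\alpha$ (the unique nonzero case, $f=e'$, giving $[e',e]=-\alpha$). In the first case $\ad_f\om=0$ trivially; in the second case $\ad_{e'}\om = [e',e]\wedge\alpha = -\alpha\wedge\alpha = 0$. Hence $\ad_x\om=0$ for all $x\in\n$, i.e. $\om\in(W\wedge\z)^{\n}$, and clearly $\om=e\wedge\alpha\neq 0$ since $e$ and $\alpha$ are distinct basis vectors. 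This finishes the proof.

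The only subtlety — and it is minor — is making sure the decomposition $\n=W\oplus\z$ in terms of the canonical bases is legitimately in force; this requires $G$ to have no isolated vertices, which is the standing assumption recorded just before the lemma (an isolated vertex would instead contribute directly to $\z$ and the statement would be about $W\wedge\z$ differently). Granting that, there is essentially no obstacle: the key structural fact doing all the work is that a degree-one vertex $e$ sees exactly one nonzero bracket, and that single bracket lands in $k\alpha$, so wedging against $\alpha$ kills it. One could equally phrase the argument representation-theoretically — $\om$ spans a trivial $\n$-submodule of $W\wedge\z$ because $\ad$ shifts everything into $\Lambda^2\z$ or kills it — but the direct basis check above is the cleanest route. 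No nonlinear equations or deep input from the earlier theorems are needed here; this lemma is purely combinatorial bookkeeping, and its role is to show that the hypothesis $(\Lambda^2\n)^{\n}=\Lambda^2\z$ used throughout Section 1 forces all vertices to have degree at least $2$.
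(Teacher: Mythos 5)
Your proof is correct and uses exactly the same witness $\om=e\wedge\alpha$ and the same invariance check as the paper (the only nonzero bracket against $e$ is $\pm\alpha$, which is killed by wedging with $\alpha$). Your added remarks about isolated vertices and the lemma's role are accurate but not needed; nothing is missing.
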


\begin{proof}
Let $\alpha$ be the unique edge joining $e$ with $e'$,
$
\xymatrix@-1ex{
&\\
&\ar@{..}[ul]\ar@{..}[l]\ar@{..}[dl]e'\ar@{-}[r]^\alpha &e\\
& }
$\\
If $\om:=e\wedge \alpha\in W\wedge \z$ then
$
\ad_{e'}\om=\pm \alpha\wedge\alpha=0
$, and clearly also $\ad_{e''}\om=0$ for any other $e''\in  V$.
\end{proof}
A more involve prove is needed for the following Lemma:
\begin{lem} Let $\om\in \wzn$, if $\om=\underset{e\in V,\al_i\in A}{\sum}\leal e\wg \al_i$, then $\leal =0$
 for all  $\al\in A$ and for each $e\in V$ such that $|e|\geq 2$.
 \end{lem}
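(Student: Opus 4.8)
The goal is to show that if $\om\in(W\wedge\z)^\n$ and we expand it in the canonical basis as $\om=\sum_{e\in V,\al\in A}\leal\, e\wg\al$, then all coefficients $\leal$ attached to a vertex $e$ of valency $\ge 2$ must vanish. The strategy is to exploit the invariance condition $\ad_{e''}\om=0$ for every vertex $e''$ and read off what it says coefficient by coefficient. I would first fix a vertex $e_0$ with $|e_0|\ge 2$ and an arbitrary arrow $\al_0$, and aim to prove $\lambda_{e_0,\al_0}=0$.

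First I would compute $\ad_{e''}\om$ for a general vertex $e''$: since $[e'',e]\in\z$ and $[\z,\n]=0$, we get $\ad_{e''}\om=\sum_{e,\al}\leal\,[e'',e]\wg\al$, where $[e'',e]$ is either $\pm$(the arrow joining $e''$ and $e$) or $0$. Thus $\ad_{e''}\om$ lives in $\Lambda^2\z$, and its coefficient on a basis bivector $\beta\wg\al$ (with $\beta\ne\al$ arrows) is a signed sum of those $\leal$ for which $e$ is the endpoint of $\beta$ opposite to $e''$. Setting all these coefficients to zero for all choices of $e''$, $\beta$, $\al$ gives a linear system on the $\leal$. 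The key step is to choose $e''$ cleverly: because $|e_0|\ge2$, there are at least two arrows $\beta_1,\beta_2$ incident to $e_0$; let $e_1''$ be the other endpoint of $\beta_1$. Then in $\ad_{e_1''}\om$, looking at the coefficient of $\beta_1\wg\al_0$ (for $\al_0$ any arrow distinct from $\beta_1$), the only term that can contribute with vertex-part landing on $\beta_1$ and arrow-part equal to $\al_0$ is the one coming from $e=e_0$; so that coefficient is $\pm\lambda_{e_0,\al_0}$ up to contributions from other vertices $e$ adjacent to $e_1''$ via $\beta_1$ — but $\beta_1$ has only two endpoints, so the only such $e$ is $e_0$. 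Hence $\lambda_{e_0,\al_0}=0$.

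The main obstacle is bookkeeping the overlaps: when I extract the coefficient of $\beta\wg\al$ in $\ad_{e''}\om$, several terms $\leal\,[e'',e]\wg\al'$ could a priori collapse onto the same basis bivector — either because $[e'',e]=\pm\beta$ for two different choices of $e$ (impossible: a simple graph has at most one edge between $e''$ and any vertex, so $e$ is uniquely determined as the other endpoint of $\beta$, provided $e''$ is an endpoint of $\beta$), or because the pair $(e,\al)$ and $(e',\al')$ with $[e'',e]=\al'$, $[e'',e']=\al$ both produce $\pm\al\wg\beta$ — this is the genuinely delicate case and is exactly why one needs $|e_0|\ge2$ and a careful choice of $e''$ so that this diagonal interference does not hide the coefficient we care about. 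I would handle this by choosing $\al_0\ne\beta$ (legitimate since $|e_0|\ge2$ guarantees a second incident arrow, and for arrows $\al_0$ not incident to $e_0$ the argument is even cleaner), so that the ``swapped'' term would require $[e'',\cdot]=\al_0$ with the result being $\al_0\wg\beta$, forcing that other vertex to be adjacent to $e''$ via $\al_0$; tracking which vertices these are and showing the resulting subsystem still pins down $\lambda_{e_0,\al_0}$ — possibly by an induction on the number of arrows at $e_0$, peeling off one incident arrow at a time — is where the real work lies. Once $\lambda_{e_0,\al_0}=0$ for all $\al_0$, ranging over all $e_0$ with $|e_0|\ge2$ finishes the proof.
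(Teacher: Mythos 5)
Your overall strategy is the same as the paper's: expand $\om$ in the canonical basis, impose $\ad_{e''}\om=0$ for suitably chosen vertices $e''$, and extract coefficients of basis bivectors in $\Lambda^2\z$. You also correctly identify the one genuinely delicate point, namely that a term $\lambda_{e^*,\beta_1}\,e^*\wg\beta_1$ with $[e_1'',e^*]=\pm\al_0$ contributes to the same bivector $\beta_1\wg\al_0$ as the term you want, so the equation you extract reads $\pm\lambda_{e_0,\al_0}\pm\lambda_{e^*,\beta_1}=0$ rather than $\lambda_{e_0,\al_0}=0$. But having named this obstacle, you do not overcome it: the sentence ``tracking which vertices these are and showing the resulting subsystem still pins down $\lambda_{e_0,\al_0}$ --- possibly by an induction on the number of arrows at $e_0$ --- is where the real work lies'' is exactly the part of the proof that is missing, and the conclusion ``Hence $\lambda_{e_0,\al_0}=0$'' in your second paragraph is not justified in the contaminated case. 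Note also that the contamination couples $\lambda_{e_0,\al_0}$ to a coefficient $\lambda_{e^*,\beta_1}$ attached to a \emph{different} vertex, so an induction on the arrows incident to $e_0$ alone does not address it.

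The residual hard case is concrete: if $\al_0$ is not incident to some neighbour of $e_0$, you can switch to that neighbour and get a clean equation, so the problem is precisely when every available neighbour of $e_0$ is an endpoint of $\al_0$ --- in the minimal situation, a triangle $e_0,e',e''$ with $\al_0=\gamma$ the edge joining the two neighbours $e'$, $e''$ of $e_0$. The paper resolves this by using \emph{three} invariance equations, including invariance under $\ad_{e_0}$ itself, which you never invoke: $\ad_{e_0}\om=0$ forces the two interfering neighbour coefficients to coincide ($\lambda_{e',\beta}=\lambda_{e'',\al}=:c$), and then the equations extracted from $\ad_{e'}\om=0$ and $\ad_{e''}\om=0$ give $\lambda_{e_0,\gamma}+c=0$ and $\lambda_{e_0,\gamma}-c=0$ (the relative sign flips because of the antisymmetry of the wedge), whence $\lambda_{e_0,\gamma}=c=0$. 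Without this use of $\ad_{e_0}$-invariance and the sign comparison between the two neighbours, the linear system you describe does not pin down the coefficient, so as it stands the proposal has a genuine gap at its central step.
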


\begin{proof}
Consider  $e\in V$ a vertex with $|e|\geq 2$.
\[
\xymatrix{
e_1&e_2\ar@/^3ex/@{..}[dd]\\
e\ar@{-}[u]^\alpha\ar@{-}[ur]^{ \beta}\ar@{-}[dr]\ar@{--}[r]&\\
&e_n
}
\]
Denote $e'=e_1$ and $e''=e_2$ in the drawing, i.e. $e'$ and $e''$ are two different vertices incident to $e$ with corresponding edges $\al$ and $\beta$.
A general element in $W\wedge\z$
will be of the form
\[
\begin{array}{rcccc}
\om &=&ae\wg\al&+be\wg\beta&+e\wg 
(\underset{\al_i\neq\al,\beta}{\sum}\lambda_i \al_i)
\\
&&+a'e'\wg\al&+b'e'\wg\beta&+e'\wg 
(\underset{\al_i\neq\al,\beta}{\sum}\mu_i \al_i)
\\
&&+a''e''\wg\al&+b''e''\wg\beta&+e''\wg 
(\underset{\al_i\neq\al,\beta}{\sum}\nu_i \al_i)
\\
&&+\underset{e'''\neq e,e',e''}{\sum}
a_{e'''}
e'''\wg\al
&+
\underset{e'''\neq e,e',e''}{\sum}
b_{e'''}e'''\wg\beta
&+
\underset{\underset{\al_i\neq\al,\beta}{e'''\neq e,e',e''}}{\sum}\lambda_{i,e'''}
e'''\wg  \al_i
\end{array}\]
for some $a,a',a''a_i,\mu_i,\nu_i,\lambda_{i,e'''}\in k$.
Since $\om$ is supposed to be invariant, we have
$$
\begin{array}{rcccc}
0=\ad_e\om &=&0&+0&+0\\
&&+a'\al\wg\al&+b'\al\wg\beta&+\al\wg 
(\underset{\al_i\neq\al,\beta}{\sum}\mu_i \al_i)
\\
&&+a''\beta\wg\al&+b''\beta\wg\beta&+\beta\wg 
(\underset{\al_i\neq\al,\beta}{\sum}\nu_i \al_i)
\\
&&+\underset{e'''\neq e,e',e''}{\sum}
a_{e'''}
[e,e''']\wg\al
&+
\underset{e'''\neq e,e',e''}{\sum}
b_{e'''}[e,e''']\wg\beta
&+
\underset{\underset{\al_i\neq\al,\beta}{e'''\neq e,e',e''}}{\sum}\lambda_{i,e'''}
[e,e''']\wg  \al_i
\end{array}$$
then, in particular $b'=a''\quad (*)$. \\
Analogously, compute $0=\ad_{e'}\om=$
$$\begin{array}{rcccc}
 &=&-a\al\wg\al&-b\al\wg\beta&-\al\wg 
(\underset{\al_i\neq\al,\beta}{\sum}\lambda_i \al_i)
\\
&&+0&+0&+0
\\
&&+a''[e',e'']\wg\al&+b''[e',e'']\wg\beta&+[e',e'']\wg (\underset{\al_i\neq\al,\beta}{\sum}\nu_i \al_i) 
\\
&&+\underset{e'''\neq e,e',e''}{\sum}
a_{e'''}
[e',e''']\wg\al
&+
\underset{e'''\neq e,e',e''}{\sum}
b_{e'''}[e',e''']\wg\beta
&+
\underset{\underset{\al_i\neq\al,\beta}{e'''\neq e,e',e''}}{\sum}\lambda_{i,e'''}
[e',e''']\wg  \al_i
\end{array}$$
$$\begin{array}{rcccc}&&=
&-b\al\wg\beta&-\al\wg 
(\underset{\al_i\neq\al,\beta}{\sum}\lambda_i \al_i)
\\
\\
&&+a''[e',e'']\wg\al&+b''[e',e'']\wg\beta&+[e',e'']\wg (\underset{\al_i\neq\al,\beta}{\sum}\nu_i \al_i) 
\\
&&+\underset{e'''\neq e,e',e''}{\sum}
a_{e'''}
[e',e''']\wg\al
&+
\underset{e'''\neq e,e',e''}{\sum}
b_{e'''}[e',e''']\wg\beta
&+
\underset{\underset{\al_i\neq\al,\beta}{e'''\neq e,e',e''}}{\sum}\lambda_{i,e'''}
[e',e''']\wg  \al_i
\end{array}
\quad (1)
$$
Hence, $b=0$. In the same way $0=\ad_{e''}\om =$
$$\begin{array}{rcccc}
&=& -a\beta\wg\al
&&-\beta\wg 
(\underset{\al_i\neq\al,\beta}{\sum}\lambda_i \al_i)
\\
\\
&&+a'[e'',e']\wg\al&+b'[e'',e']\wg\beta&+[e'',e']\wg (\underset{\al_i\neq\al,\beta}{\sum}\nu_i \al_i) 
\\
&&+\underset{e'''\neq e,e',e''}{\sum}
a_{e'''}
[e'',e''']\wg\al
&+
\underset{e'''\neq e,e',e''}{\sum}
b_{e'''}[e'',e''']\wg\beta
&+
\underset{\underset{\al_i\neq\al,\beta}{e'''\neq e,e',e''}}{\sum}\lambda_{i,e'''}
[e'',e''']\wg  \al_i
\end{array}
\quad (2)
$$
Hence, $a=0$.
To continue, we need to consider the following cases $i)$ and $ii)$.

\

$i)$ Suppose $e'$ and $e''$ are not joined by any edge in the graph, so $[e',e'']=0$, then 
$(1)$ equals
$$\begin{array}{rccccccc}0&=
&&\left(
\underset{\al_i\neq\al,\beta}{\sum}\lambda_i \al_i
+\underset{e'''\neq e,e',e''}{\sum}
a_{e'''}
[e',e''']
\right)\wg\al\\
&&
&+
\underset{e'''\neq e,e',e''}{\sum}
b_{e'''}[e',e''']\wg\beta
&+
\underset{\underset{\al_i\neq\al,\beta}{e'''\neq e,e',e''}}{\sum}\lambda_{i,e'''}
[e',e''']\wg  \al_i
\end{array}
\quad (1')
$$
Notice in the previous equation that all the terms belong to different components,
so $(1')$  implies   $\lambda_i=0$ for all $i$ except those corresponding to $\al_i$ incident to $e'$.

In the same way, $(2)$ implies   $\lambda_i=0$ for all $i$ except those corresponding to $\al_i$ incident to $e''$. 
But there is no edge between $e'$ and $e''$, hence all $\lambda_i=0$.

\

$ii)$  Suppose $e'$ and $e''$ are joined by an edge $\gamma$, so $[e',e'']=\gamma$. Equation 1 
reads in this case
$$\begin{array}{rcccccccc}0&=
&&\left( 
\underset{\al_i\neq\al,\beta}{\sum}\lambda_i \al_i+a''\gamma
+\underset{e'''\neq e,e',e''}{\sum}
a_{e'''} [e',e''']
\right)\wg\al
\\
&&&+b'' \gamma \wg\beta&+\gamma \wg (\underset{\al_i\neq\al,\beta}{\sum}\nu_i \al_i) 
\\
&&
&+
\underset{e'''\neq e,e',e''}{\sum}
b_{e'''}[e',e''']\wg\beta
&+
\underset{\underset{\al_i\neq\al,\beta}{e'''\neq e,e',e''}}{\sum}\lambda_{i,e'''}
[e',e''']\wg  \al_i
\end{array}
\quad (1'')
$$
Looking at the terms with common factor $\al$, we see that    $\lambda_i=0$ for all $i$ except those corresponding to $\al_i$ incident to $e'$. A similar computation interchanging $e'$ with $e''$
says $\lambda_i=0$ for all $i$ except those corresponding to $\al_i$ incident to $e''$. 

Resuming, if we call $c=b'=a''$ and $\lambda=\lambda_\gamma$, $\om$ is of the following form
$$\begin{array}{rcccc}
\om &=&&
&e\wg\lambda \gamma
\\
&&+a'e'\wg\al&+ce'\wg\beta&+e'\wg 
(\underset{\al_i\neq\al,\beta}{\sum}\mu_i \al_i)
\\
&&+ce''\wg\al&+b''e''\wg\beta&+e''\wg 
(\underset{\al_i\neq\al,\beta}{\sum}\nu_i \al_i)
\\
&&+\underset{e'''\neq e,e',e''}{\sum}
a_{e'''}
e'''\wg\al
&+
\underset{e'''\neq e,e',e''}{\sum}
b_{e'''}e'''\wg\beta
&+
\underset{\underset{\al_i\neq\al,\beta}{e'''\neq e,e',e''}}{\sum}\lambda_{i,e'''}
e'''\wg  \al_i
\end{array}$$
We go back to equations $(1)$ and $(2)$, we have

$$\begin{array}{rcccc}0&=
&&-\lambda\al\wg  \gamma
\\
\\
&&+c\gamma\wg\al&+b''\gamma\wg\beta&+\gamma\wg (\underset{\al_i\neq\al,\beta}{\sum}\nu_i \al_i) 
\\
&&+\underset{e'''\neq e,e',e''}{\sum}
a_{e'''}
[e',e''']\wg\al
&+
\underset{e'''\neq e,e',e''}{\sum}
b_{e'''}[e',e''']\wg\beta
&+
\underset{\underset{\al_i\neq\al,\beta}{e'''\neq e,e',e''}}{\sum}\lambda_{i,e'''}
[e',e''']\wg  \al_i
\end{array}
\quad (1)
$$
and
$$\begin{array}{rcccc}
0 &=& &&-\lambda\beta\wg  \gamma
\\
\\
&&-a'\gamma\wg\al&-c\gamma\wg\beta&-\gamma\wg (\underset{\al_i\neq\al,\beta}{\sum}\nu_i \al_i) 
\\
&&+\underset{e'''\neq e,e',e''}{\sum}
a_{e'''}
[e'',e''']\wg\al
&+
\underset{e'''\neq e,e',e''}{\sum}
b_{e'''}[e'',e''']\wg\beta
&+
\underset{\underset{\al_i\neq\al,\beta}{e'''\neq e,e',e''}}{\sum}\lambda_{i,e'''}
[e'',e''']\wg  \al_i
\end{array}
\quad (2)
$$
We look at the terms with $\al\wg\gamma$ in $(1)$ and   $\beta\wg\gamma$ in $(2)$ and obtain
$$
\lambda+c=0 \ \hbox{  and }
\lambda-c=0
$$
Hence, $ \lambda=0$, so $\lambda_{e,\al_i}=0$ for all $\al_i\in A$.
\end{proof}
As a corollary we can prove the following characterization:

\begin{teo}\label{teoLambda2}
For a graph algebra (without isolated vertices), $(\Lambda^2\n)^\n
=\Lambda^2\z$ if and only if $|e|\geq 2$ for all $e\in V$.
\end{teo}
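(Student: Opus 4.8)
The plan is to prove the biconditional in Theorem \ref{teoLambda2} by combining the two preceding lemmas. Recall we always have the inclusion $(\Lambda^2\n)^\n \subseteq W\wedge\z \oplus \Lambda^2\z$ (this was used in the proof of Theorem \ref{teoTST}, and it holds because $\ad_x$ kills $\Lambda^2\z$ and sends $W$ into $\z$, so no $\Lambda^2 W$-component can survive being $\n$-invariant). Moreover $\Lambda^2\z \subseteq (\Lambda^2\n)^\n$ always holds since $\z$ is central. So the only question is whether the $W\wedge\z$-part of $(\Lambda^2\n)^\n$ is zero, i.e. whether $(W\wedge\z)^\n = 0$. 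Thus the theorem reduces to: $(W\wedge\z)^\n = 0$ if and only if every vertex has valency at least $2$.

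For the ``only if'' direction I would argue by contrapositive: if some vertex $e$ has $|e| = 1$, then the first lemma produces a nonzero element $\om = e\wedge\alpha \in (W\wedge\z)^\n$, so $(\Lambda^2\n)^\n \supsetneq \Lambda^2\z$. For the ``if'' direction, assume $|e|\geq 2$ for all $e\in V$ and take an arbitrary $\om \in (W\wedge\z)^\n$. Write $\om = \sum_{e\in V,\ \alpha_i\in A} \lambda_{e,\alpha_i}\, e\wedge\alpha_i$. The second lemma says precisely that $\lambda_{e,\alpha_i} = 0$ for every $\alpha_i \in A$ and every $e\in V$ with $|e|\geq 2$; since by hypothesis all vertices satisfy $|e|\geq 2$, every coefficient vanishes, hence $\om = 0$. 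Therefore $(\Lambda^2\n)^\n = \Lambda^2\z$, completing the proof.

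The two lemmas do essentially all the work, so the proof of the theorem itself is short; the main obstacle — already handled in the excerpt — was the case analysis in the second lemma (whether the two chosen neighbours $e'$, $e''$ of $e$ are themselves joined by an edge). The only point requiring a little care in assembling the theorem is to be explicit that the hypothesis ``no isolated vertices'' is what lets us use the canonical decomposition $\n = W\oplus\z$ with $W = \bigoplus_{e\in V} ke$ and $\z = \bigoplus_{\alpha\in A} k\alpha$, and that the valency condition $|e|\geq 2$ for all $e$ is exactly what makes the conclusion of the second lemma cover all basis vectors of $W\wedge\z$. I would also remark that, combined with the Corollary to Theorem \ref{teoTST}, this gives a purely combinatorial criterion on $G$ guaranteeing $\delta\z\subset\Lambda^2\z$ for any Lie bialgebra structure on $\n(G)$.
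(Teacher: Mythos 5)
Your proposal is correct and follows exactly the route the paper intends: the theorem is stated there as a corollary of the two preceding lemmas, with Lemma~1 (a valency-one vertex yields a nonzero invariant in $W\wedge\z$) giving one direction by contraposition and Lemma~2 (all coefficients $\lambda_{e,\alpha}$ vanish at vertices of valency $\geq 2$) giving the other, after reducing via $(\Lambda^2\n)^\n\subseteq W\wedge\z\oplus\Lambda^2\z$ and $\Lambda^2\z\subseteq(\Lambda^2\n)^\n$ to the vanishing of $(W\wedge\z)^\n$. Your assembly of these ingredients, including the remark that the invariants split componentwise because $\ad_x$ shifts the $\z$-degree, matches the paper's (implicit) argument.
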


\subsection{Graph algebras and the TST equations}

For a graph $(G=(V,A)$ with vertices $V$ and arrows $A$
we use the canonical basis of the center 
$\{z_i\}_{i=1}^{\dim\z}=\{\alpha\}_{\alpha\in \in A}$.
The system of equations with unknown antisymmetric map $S:W^*\to W$
is of the form
\[
T_\al ST_\beta +T_\beta ST_\al=0 \ \forall \al,\beta\in A
\]
Fix $V=\{e_1,\dots,e_n\}$ the set of vertices, it is a basis of $W$ by definition; let 
$\{e_1^*,\dots,e_n^*\}$ be the dual basis. It is easy to see that for each $\alpha\in A$ joining $e_i$ with $e_j$, with $i<j$ we have
$$
T_\al(e_i)=e_j^*,\ T_\al(e_j)=-e_i^* ,\ T_\al(e_k)=0\ \forall k\neq i,j
$$
In matrix notation $[T_\al]=E_{j,i}-E_{i,j}$.
The following is a translation of the TST condition in graph language:

\begin{teo}\label{TSTgraph}
 Let $\n$ be a 2-steps nilpotent Lie algebra arising from a graph 
$G=(V,A)$, $i,j\in V$ and $S$ an antisymmetric solution of the system
$T_\al ST_\beta +T_\beta ST_\al=0 \ \forall \al,\beta\in A$.
\begin{enumerate}
\item If there exists an edge $\al$ joining $i$ and $j$, then $S_{i,j}=0$.

\hskip 4cm $
\xymatrix{
i\ar[r]^\alpha&j}$

\item If there are two edges $\al$ and $\beta$ and four vertices $i,i',j,j'$ such that $\al$ joins $i$ and $i'$ and $\beta$ joins $j$ and $j'$ with $\{i,i'\}\cap\{j,j'\}=\emptyset$ then $S_{i,j}=0$.
\vskip -4ex
\[
\xymatrix@-2.2ex{
&&&\\
&i\ar[d]^\alpha\ar@{..}[r]\ar@{..}[u]\ar@{..}[l]\ar@{..}[lu]&
j\ar[d]^\beta\ar@{..}[r]\ar@{..}[ur]\ar@{..}[u]&\\
&i'\ar@{..}[r]\ar@{..}[dl]\ar@{..}[l]\ar@{..}[d]&
j'\ar@{..}[r]\ar@{..}[rd]\ar@{..}[d]&\\
&&&
}
\]
\end{enumerate}

\end{teo}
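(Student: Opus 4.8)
The plan is to work directly with the matrix description $[T_\al]=E_{j,i}-E_{i,j}$ provided just before the statement, plug it into the governing relation $T_\al S T_\beta + T_\beta S T_\al = 0$, and read off what the $(i,j)$-entry condition says. Throughout I treat $S=(S_{k,l})$ as an arbitrary antisymmetric $n\times n$ matrix (antisymmetry is part of the hypothesis and follows from Theorem \ref{teoTST}); note the products involved only require the combinatorics of which basis vectors appear in the images of the $T$'s, so the computation is bookkeeping with elementary matrices $E_{p,q}E_{r,s}=\delta_{q,r}E_{p,s}$.

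For part (1), with a single edge $\al$ joining $i$ and $j$ the relevant relation is the $\al=\beta$ special case already recorded inside the proof of Theorem \ref{teoTST}, namely $T_\al S T_\al = 0$. Writing $T_\al = E_{j,i}-E_{i,j}$ and expanding $(E_{j,i}-E_{i,j})S(E_{j,i}-E_{i,j})$ using $E_{p,q}SE_{r,s}=S_{q,r}E_{p,s}$, the four terms collapse and the coefficient of the off-diagonal elementary matrices forces a linear relation among $S_{i,i},S_{j,j},S_{i,j},S_{j,i}$; since $S$ is antisymmetric one has $S_{i,i}=S_{j,j}=0$ and $S_{j,i}=-S_{i,j}$, and the surviving equation reads $S_{i,j}=0$ (more precisely one gets something like $2S_{i,j}E_{i,j}=0$ or its transpose). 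So part (1) is a two-line computation once the matrices are substituted.

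For part (2) I would use the full relation with the two distinct edges: $\al$ joining $i,i'$ and $\beta$ joining $j,j'$, with the four vertices distinct, so $[T_\al]=E_{i',i}-E_{i,i'}$ and $[T_\beta]=E_{j',j}-E_{j,j'}$ (up to sign conventions depending on the vertex ordering, which do not affect the conclusion). Expanding $T_\al S T_\beta + T_\beta S T_\al$ into its eight elementary-matrix terms and using $E_{p,q}SE_{r,s}=S_{q,r}E_{p,s}$, each term is a scalar ($\pm$ some $S_{k,l}$) times an elementary matrix $E_{p,s}$ with $p\in\{i,i'\}$ and $s\in\{j,j'\}$ (and symmetrically $p\in\{j,j'\}$, $s\in\{i,i'\}$ from the second summand). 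Because the four vertices are distinct, these elementary matrices are distinct, so the vanishing of the whole expression forces the vanishing of each scalar coefficient. The coefficient of $E_{i,j}$ is exactly (a sign times) $S_{i',j'}$, the coefficient of $E_{i,j'}$ is $\pm S_{i',j}$, etc. — relabeling $(i,i',j,j')$ appropriately, one of these reads $S_{i,j}=0$; in fact all four of $S_{i,j}, S_{i,j'}, S_{i',j}, S_{i',j'}$ vanish, and the statement just records one of them. The only subtlety, and the mild ``obstacle'' here, is being careful with the $\pm$ signs coming from the vertex ordering in $[T_\al]$ and making sure the four target elementary matrices really are pairwise distinct — which is precisely where the disjointness hypothesis $\{i,i'\}\cap\{j,j'\}=\emptyset$ gets used, since overlap would merge terms and weaken the conclusion. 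I would therefore organize the write-up as: (a) substitute the matrices, (b) expand using the elementary-matrix multiplication rule, (c) group by target elementary matrix, (d) invoke antisymmetry and distinctness to isolate $S_{i,j}=0$.
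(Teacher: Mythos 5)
Your proposal is correct and is essentially the paper's own argument: both substitute the explicit matrices $[T_\al]=E_{j,i}-E_{i,j}$ into $T_\al ST_\al=0$ (for part 1) and into $T_\al ST_\beta+T_\beta ST_\al=0$ (for part 2), the paper by evaluating the single entry $(i,j)$ resp.\ $(i',j')$ of the product, you by expanding into elementary matrices and matching coefficients, with the disjointness $\{i,i'\}\cap\{j,j'\}=\emptyset$ playing exactly the role you identify (in the paper it makes the second summand's contribution to that entry vanish; in your version it keeps the target elementary matrices distinct). The only blemish is cosmetic: in part 1 the exact expansion is $S_{i,j}(E_{j,i}-E_{i,j})=0$ rather than ``$2S_{i,j}E_{i,j}=0$'', which does not affect the conclusion.
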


\begin{proof} Consider the equation $T_\al ST_\al=0 $ for a given $\al\in A$, where $\al$ joins $e_i$ with $e_j$.
\[
(T_\al ST_\al)_{i,j}=\sum_{k,\ell} (T_\al)_{i,k} (S)_{k,\ell}(T_\al)_{\ell,j}
=(T_\al)_{i,j} (S)_{j,i}(T_\al)_{i,j}=(S)_{j,i}
\]
This shows 1.
Next suppose we have two edges $\al$ and $\beta$ such that $\al$ joins $i$ and $i'$, 
and $\beta$ joins $j$ and $j'$ with $\{i,i'\}\cap\{j,j'\}=\emptyset$ then $S_{i,j}=0$. 
Writing the TST- equation for
 these $\al,\beta$,
\[
(T_\al ST_\beta +T_\beta ST_\al)_{i',j'}=
\sum_{k,\ell} 
(T_\al)_{i',k} (S)_{k,\ell}(T_\beta)_{\ell,j'}
+
\sum_{k,\ell}  (T_\beta)_{i',k} (S)_{k,\ell}(T_\al)_{\ell,j'}
\]\[= (T_\al)_{i',i} (S)_{i,j}(T_\beta)_{j,j'}
=\pm (S)_{i,j}
\]
since in the second sum, $(T_\beta)_{i',k} =0= (T_\al)_{\ell,j'}$ for all $k,\ell$.
\end{proof}

\[
\xymatrix@-2ex{&&&&&&\hbox{Examples}\\
\bullet\ar@{-}[r]&\bullet&\bullet\ar@{-}[r]&\bullet\ar@{-}[r]&\bullet\ar@{-}[r]&\bullet&\ar@{..}[dddd]&\bullet\ar@{-}[r]&\bullet\ar@{-}[r]&\bullet\\
&\bullet\ar@{-}[ld]\ar@{-}[d]&&\bullet\ar@{-}[d]&\bullet\ar@{-}[d]&&&\\
\bullet\ar@{-}[r]&\bullet&&\bullet&\bullet&&&&\bullet\\
 \bullet\ar@{-}[d]\ar@{-}[r]&\bullet\ar@{-}[d]&&&\bullet\ar@{-}[ld]\ar@{-}[d]&&&\bullet\ar@{-}[r]&\bullet\ar@{-}[r]\ar@{-}[u]&\bullet\\
 \bullet\ar@{-}[r]&\bullet&\bullet\ar@{-}[r]&\bullet\ar@{-}[r]&\bullet
&TST\ type &&non&TST\ type}
\]
\begin{rem} 
If we are interested in algebras with $(\Lambda^2\n)^\n=\Lambda^2\z $
then we must look at graphs with $|e|\geq 2$ for all $e\in V$. But if this is the case,
 then  any   pair $i,j$ in $V$
 satisfies either condition 1 or 2 of the previous proposition, so we have the following corollary:
\end{rem}

\begin{coro}
If $|e|\geq 2$ for all $e\in V$ then $(\Lambda^2\n)^\n=\Lambda^2\z$
and also $\n$ is of TST type, as a consequence, any cobracket structure on $\n$ 
satisfies
\[
\delta\z\subseteq\Lambda^2\z,\ \
\delta W\subseteq W\wedge \z\oplus  \Lambda^2\z,
\]
and it is of the form as in Theorem \ref{teoconstruccion}.
\end{coro}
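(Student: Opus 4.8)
The plan is to deduce the corollary by assembling three facts already proved in the excerpt: Theorem \ref{teoLambda2}, Theorem \ref{TSTgraph}, and the Corollary to Theorem \ref{teoCoro}. The equality $(\Lambda^2\n)^\n=\Lambda^2\z$ is nothing but Theorem \ref{teoLambda2} applied under the standing hypothesis $|e|\ge2$ for all $e\in V$, so that half requires no work.

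For the claim that $\n$ is of type TST, I would take an antisymmetric $S\colon W^*\to W$ satisfying $T_\al S T_\beta+T_\beta S T_\al=0$ for all $\al,\beta\in A$ and show that every matrix entry $S_{i,j}$ vanishes; the diagonal entries vanish by antisymmetry, so fix two distinct vertices $i\ne j$. If $i$ and $j$ are joined by an edge, part (1) of Theorem \ref{TSTgraph} immediately gives $S_{i,j}=0$. If they are not joined, I would exhibit an edge $\al$ at $i$ and an edge $\beta$ at $j$ whose remaining endpoints $i'$ and $j'$ satisfy $\{i,i'\}\cap\{j,j'\}=\emptyset$, and then invoke part (2). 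The only thing to check is that such edges exist: because $i$ and $j$ are non-adjacent, no neighbour of $i$ equals $j$ and no neighbour of $j$ equals $i$; because $|i|\ge2$ the vertex $i$ has at least two distinct neighbours (in a simple graph distinct edges at $i$ end at distinct vertices), and fixing any neighbour $j'$ of $j$ (which exists since $|j|\ge1$), at most one neighbour of $i$ can coincide with $j'$, so we may pick a neighbour $i'$ of $i$ with $i'\ne j'$. Then the four vertices $i,i',j,j'$ are pairwise distinct as required and part (2) yields $S_{i,j}=0$. Hence $S=0$, i.e.\ $\n$ is of type TST.

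Once both hypotheses $(\Lambda^2\n)^\n=\Lambda^2\z$ and ``$\n$ of type TST'' are in hand, the inclusions $\delta\z\subseteq\Lambda^2\z$ and $\delta W\subseteq W\wedge\z\oplus\Lambda^2\z$ are exactly the Corollary following Theorem \ref{teoCoro}, and the statement that every cobracket on $\n$ is of the shape given in Theorem \ref{teoconstruccion} is precisely Theorem \ref{teoCoro}. So the proof is essentially a bookkeeping exercise; the one step I expect to require genuine care is the small combinatorial verification that every vertex pair falls under case (1) or case (2) of Theorem \ref{TSTgraph}, in particular ruling out the degenerate coincidences among the four vertices — everything else is a direct citation of earlier results.
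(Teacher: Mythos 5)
Your proposal is correct and follows essentially the same route as the paper: the paper deduces the corollary from Theorem \ref{teoLambda2}, Theorem \ref{TSTgraph} and Theorem \ref{teoCoro} (together with the corollary on TST algebras), the only non-citation content being the remark that every pair of vertices falls under case (1) or (2) of Theorem \ref{TSTgraph}. Your combinatorial verification of that remark (using $|e|\geq 2$ and simplicity of the graph to pick $i'$ and $j'$ with $\{i,i'\}\cap\{j,j'\}=\emptyset$, and antisymmetry of $S$ for the diagonal entries) is exactly the detail the paper leaves implicit, and it is carried out correctly.
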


\begin{rem}
In the case where a graph has (all vertices with) valency bigger or equal to 2,
even if the above corollary shows a big simplification of the structure constants, in order
to find all possible data as in Theorem \ref{teoconstruccion} we still have to solve
nonlinear equations, for instance, the data includes a Lie algebra structure on $\z^*$.
 We present next a family of Lie bialgebras that include the ones where the
2-cocycle $\delta$ is a coboundary where this part of the data is trivial.
\end{rem}

\section{Nearly Coboundary Lie bialgebras}
Recall that a Lie bialgebra  $(\n,\delta)$
is called {\bf coboundary} if there exists 
$r\in \Lambda^2\n$ such that
\[
\delta(x)=\ad_x(r)\ \forall x\in \n
 \]
In any coboudary  bialgebra we have $\delta(\z)=0$. This motivates the following
definition
\begin{defi}
A Lie bialgebra $(\n,\delta)$ will be
called
{ \bf nearly coboundary}  if  $\delta |_\z\equiv0$.
\end{defi}

\begin{ex} Let $\delta$ satisfying $\delta(W)\subset\Lambda^2\z$, 
that is, it is of the form  
\[\delta(e_i)=
\underset{\al,\beta\in A}{\sum}\mu^{\al,\beta}_i\al\wg\beta\quad \forall i\in  V
\]
with arbitrary coefficients $\mu^{\al,\beta}_i\in K$ verifing 
 only $\mu^{\al,\beta}_i=-\mu^{\beta,\al}_i$ for all $i,\al,\beta$. Then
  $\delta$ endows $\n$ of a  nearly coboundary Lie bialgebra structure. 
   Notice that there are $|V|\frac{|A|(|A|-1)}{2}$ free parameters, while for
    a coboundary
Lie bialgebra structure with $\delta(W)\subset \Lambda^2\z$ we 
need an element
$$r=\sum_{i\in V,\al\in A}r_{ij}e_i\wg\al\in W\wg\z$$
In order to give $r$ we need $|V||A|$ parameters, and for 
$|A|>3$ it is clear that
$|V|\frac{|A|(|A|-1)}{2}>|V||A|$, so in particular there is a lot non coboundary Lie algebras in this
family of examples.

As particular cases, for $G=C_n$  we have $|V|=n$ and $|A|=n$ and 
and $G=K_n$, $|V=n$ and $|A=n(n-1)/2$. Except $n=3$, there are
a lot of nearly coboundary Lie bialgebras of this type that are not coboundary.
 In the table we write the numbers
$|V|\frac{|A|(|A|-1)}{2}$ and $|V||A|$ in general and for small $n$:

\[\begin{array}{|c|cc||cc|}
\hline&C_n&&K_n&\\
&|V||A|&|V|\frac{|A|(|A|-1)}{2}&|V||A|&|V|\frac{|A|(|A|-1)}{2}\\
\hline
n&n^2&\frac{n^2(n-1)}{2}& \frac{n^2(n-1)}{2}&\frac18n^2(n+1)(n-1)(n-2)\\
\hline 3&9&9&9&9 \\
\hline 4&16&24&24&60\\
\hline 5&25&50&50&225\\
\hline 6&36&90&90&630\\
\hline
\end{array} 
\]
\end{ex}
Now we will study conditions of Theorem \cite{teoconstruccion} for nearly coboundary
bialgebra structures on graph algebras $\n=\n(G)$.

\begin{lemma} Let $G$ be a graph with $|e|\geq 2\ \forall e\in V$, 
$\n=\n(G)$,  $\delta:\n\to\Lambda^2\n$ a Lie bialgebra structure. Assume
that $\delta$ is nearly coboundary, so that $\delta(z)=0$ if $z\in \z$.
If we write  (as in Theorem \ref{teoconstruccion})
\[
\delta(v)=\sum_{\alpha\in A}  D_\alpha (v)\wg \alpha 
+\Phi^*(v) \hbox{ if }v\in W
\]
then  $D_\al D_\beta=D_\beta D_\al$ for all edges $\alpha,\beta$.
\end{lemma}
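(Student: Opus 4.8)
The claim is that for a nearly coboundary Lie bialgebra on a graph algebra with all valencies $\geq 2$, the maps $D_\alpha$ defining the $W\wedge\z$-component of $\delta$ mutually commute. The natural tool is the 1-cocycle condition applied to brackets $[e_i,e_j]$, together with the second bulleted item of Theorem \ref{teoconstruccion} — indeed the proof of Theorem \ref{teoTST} already extracted the identity
\[
\delta[u,v]=\sum_i\big([D^i(u),v]+[u,D^i(v)]\big)\wedge z_i
\]
for $u,v\in W$, under the hypothesis $\delta(\z)\subseteq\Lambda^2\z$, which holds here since $\delta$ is nearly coboundary (even $\delta(\z)=0$). The plan is to specialize this to the graph setting and push one more step.

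First I would fix an edge $\gamma$ joining vertices $e_i$ and $e_j$, so $[e_i,e_j]=\gamma$, and write out the right-hand side using the graph description of the bracket: $[D_\alpha(e_i),e_j]+[e_i,D_\alpha(e_j)]$ lives in $\z$ and can be expanded in the canonical basis $\{\alpha\}_{\alpha\in A}$. Since $\delta$ is nearly coboundary, $\delta(\gamma)=\delta[e_i,e_j]=0$, so the whole sum $\sum_{\alpha\in A}\big([D_\alpha(e_i),e_j]+[e_i,D_\alpha(e_j)]\big)\wedge\alpha$ must vanish in $\Lambda^2\z$. The key point is that distinct $\alpha$'s give linearly independent wedge factors; collecting the coefficient of $\beta\wedge\alpha$ for a fixed pair of edges $\alpha\neq\beta$ should yield, after using the explicit form $[e_k,e_\ell]=\pm(\text{the edge between }k,\ell)$, a relation among the matrix entries of $D_\alpha$ and $D_\beta$. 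I expect this to produce exactly the symmetry statement $D_\alpha D_\beta = D_\beta D_\alpha$ once one keeps careful track of signs: the term $[D_\alpha(e_i),e_j]$ contributes to the $\beta$-slot (where $\beta$ is the edge between $D_\alpha(e_i)$'s support and $e_j$) and the cross term from swapping $\alpha\leftrightarrow\beta$ contributes with the opposite orientation, so antisymmetry of the wedge forces the two composite maps to agree.

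The main obstacle is bookkeeping rather than conceptual: one must translate "$[D_\alpha(e_i),e_j]$ expanded in the basis of $\z$" into matrix language for $D_\alpha$, handle the sign conventions coming from the chosen orientation of each edge (smaller vertex to bigger), and verify that when $e_k$ and $e_\ell$ are \emph{not} joined by an edge the corresponding contribution simply drops out — which is why the identity closes up into a clean commutator relation rather than something weaker. It is here that the hypothesis $|e|\geq 2$ for all $e$ is implicitly used, via the earlier corollary guaranteeing $\delta$ really is of the form in Theorem \ref{teoconstruccion} with well-defined $D_\alpha$ and $\Phi$; without it the decomposition of $\delta|_W$ need not have zero $\Lambda^2 W$-part and the argument would not even get started. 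Once the coefficient extraction is done for a generic pair of vertices $e_i,e_j$ ranging over the graph, the resulting system of scalar identities assembles into $D_\alpha D_\beta=D_\beta D_\alpha$ as operators on $W$, completing the proof.
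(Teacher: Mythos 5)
Your proposal rests on the wrong compatibility condition, and this is a genuine gap. You apply the 1-cocycle identity to $[e_i,e_j]$ and hope to read off $D_\al D_\beta=D_\beta D_\al$ from
\[
0=\delta[e_i,e_j]=\sum_{\al\in A}\bigl([D_\al(e_i),e_j]+[e_i,D_\al(e_j)]\bigr)\wg\al .
\]
But this expression is \emph{linear} in the $D_\al$'s: each summand contains a single $D_\al$ bracketed against a vertex, so no composition $D_\al D_\beta$ can ever appear when you collect the coefficient of $\beta\wg\al$. What this identity yields are linear constraints on the matrix entries of the $D_\al$'s --- exactly the kind of relations the paper extracts later, e.g.\ $\lambda_{i_0,\al}=-\lambda_{j_0,\al}$ in Proposition \ref{proplambda} --- and no amount of sign bookkeeping can make linear identities ``assemble'' into the quadratic commutator relation.

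The commutativity comes instead from the co-Jacobi identity, i.e.\ from applying $\delta$ twice to a vertex $e_i$ --- equivalently, from the requirement in Theorem \ref{teoconstruccion} that $\DD\colon\z^*\to\End(W)$ be a morphism of Lie algebras. Writing $\delta(e_i)=\sum_{\al\in A}D_\al(e_i)\wg\al+\Phi^*(e_i)$ and using $\delta(\z)=0$, the $W\wg\Lambda^2\z$-component of co-Jacobi gives
\[
\sum_{\al\in A}D_\al(e_i)\wg\delta(\al)=\sum_{\al,\beta\in A}D_\al\bigl(D_\beta(e_i)\bigr)\wg\al\wg\beta,
\]
whose left-hand side vanishes because $\delta(\al)=0$; grouping the right-hand side over pairs $\al<\beta$ and using the linear independence of the elements $e_k\wg\al\wg\beta$ forces $(D_\al D_\beta-D_\beta D_\al)(e_i)=0$ for every $i$. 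Conceptually: nearly coboundary means $\z^*$ carries the abelian Lie algebra structure, so the ``Lie algebra map'' condition forces the operators $D_\al$ to commute. Your remark on where $|e|\geq 2$ enters (guaranteeing, via the corollary, that $\delta$ has the normal form of Theorem \ref{teoconstruccion}) is correct, but the central mechanism you propose cannot produce the stated conclusion.
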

\begin{proof}
We know that all bialgebra structures are as in Theorem  \ref{teoconstruccion}, so
they must satisfy
\[
\underset{\al,\in A}{\sum}D_\al(e_i)\wg \delta(\al)
=\underset{\al,\beta\in A}{\sum}D_\al(D_\beta(e_i))\wg \al\wg\beta
\]
But because $\delta(\z)=0$ we have
\[
0=\underset{\al,\beta\in A}{\sum}D_\al(D_\beta(e_i))\wg \al\wg\beta=
\underset{\al <\beta\in A}{\sum}(D_\al D_\beta-D_\beta D_\al)(e_i)\wg \al\wg\beta
\]
So $D_\al D_\beta=D_\beta D_\al$.
\end{proof}

From the above lemma, we see that a typical situation will be when the $D_\alpha$'s are simultaneously diagonalizable. In next subsection we study this particular case.

\subsection{Nearly coboundary bialgebras with diagonalizable $D_\alpha$'s}

In this subsection
we suppose that all the $\{D_\al:\al \in A\}$ are simultaneously diagonalizable, but moreover
that the set of vertices $\{e_i:1\leq i\leq n\}=V$  is a basis of eigenvectors.
We denote by $\lial$ the corresponding eigenvalues, i.e., $\forall i,\al$
\[D_\al (e_i)=\lial e_i\]

\begin{rem}
The assumption that the set of vertices are eigenvalues is not
a lost of generality in the family of free 2-step nilpotent algebras.
\end{rem}

\begin{prop}\label{proplambda} Let $\al_0$ be an edge joining $i_0$ with $j_0$, then
$$\lambda_{i_0,\al}=-\lambda_{j_0,\al}\ \forall \al\neq \al_0$$
\end{prop}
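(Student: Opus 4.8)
The plan is to exploit the 1-cocycle condition $\delta[e_{i_0},e_{j_0}] = [\delta e_{i_0}, e_{j_0}] + [e_{i_0}, \delta e_{j_0}]$ together with the hypothesis that $\delta$ is nearly coboundary, i.e.\ $\delta(\z)=0$. Since $\al_0$ joins $i_0$ and $j_0$, we have $[e_{i_0},e_{j_0}] = \al_0 \in \z$, so the left-hand side $\delta[e_{i_0},e_{j_0}] = \delta(\al_0) = 0$. Thus the whole content is the vanishing of the right-hand side in $\Lambda^2\n$.

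First I would write out the two brackets explicitly using the decomposition $\delta(e_i) = \sum_{\al\in A} D_\al(e_i)\wg\al + \Phi^*(e_i)$ from Theorem~\ref{teoconstruccion}. Since $\Phi^*(e_i)\in\Lambda^2\z$ is central, it contributes nothing to $[\delta e_{i_0}, e_{j_0}]$; the same for the $\al$ in $D_\al(e_{i_0})\wg\al$. So $[\delta e_{i_0}, e_{j_0}] = \sum_\al [D_\al(e_{i_0}), e_{j_0}]\wg\al$, and similarly $[e_{i_0}, \delta e_{j_0}] = \sum_\al [e_{i_0}, D_\al(e_{j_0})]\wg\al$. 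Now I substitute $D_\al(e_{i_0}) = \lambda_{i_0,\al}\, e_{i_0}$ and $D_\al(e_{j_0}) = \lambda_{j_0,\al}\, e_{j_0}$ using the diagonalization hypothesis, obtaining
\[
0 = \sum_{\al\in A}\bigl(\lambda_{i_0,\al}\,[e_{i_0},e_{j_0}] + \lambda_{j_0,\al}\,[e_{i_0},e_{j_0}]\bigr)\wg\al = \sum_{\al\in A}\bigl(\lambda_{i_0,\al} + \lambda_{j_0,\al}\bigr)\,\al_0\wg\al,
\]
where I used $[e_{i_0},e_{j_0}] = \al_0$ and bilinearity (note $[e_{j_0},e_{i_0}] = -\al_0$, and the signs work out because of how the two terms are paired — careful bookkeeping of the antisymmetry of $D_\al$ versus the bracket is where I must be attentive).

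The key step is then to observe that $\{\al_0\wg\al : \al\in A,\ \al\neq\al_0\}$ is a linearly independent set in $\Lambda^2\z$ (the term $\al=\al_0$ drops out since $\al_0\wg\al_0=0$), because $\{\al\}_{\al\in A}$ is a basis of $\z$. Hence each coefficient $\lambda_{i_0,\al}+\lambda_{j_0,\al}$ must vanish for every $\al\neq\al_0$, which is exactly the claimed identity $\lambda_{i_0,\al} = -\lambda_{j_0,\al}$ for all $\al\neq\al_0$. The main obstacle is purely bookkeeping: getting the signs right in the two commutator terms and confirming that the $\al=\al_0$ contribution genuinely disappears rather than producing a constraint; once that is handled the linear independence argument is immediate. (One could also note this is consistent with the antisymmetry $D_\al + D_\al^t = 0$ relative to the bracket structure, as a sanity check.)
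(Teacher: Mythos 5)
Your proposal is correct and follows essentially the same route as the paper: apply the 1-cocycle condition to $[e_{i_0},e_{j_0}]=\al_0$, use $\delta(\al_0)=0$ and the diagonal form $D_\al(e_{i_0})=\lambda_{i_0,\al}e_{i_0}$, $D_\al(e_{j_0})=\lambda_{j_0,\al}e_{j_0}$, and conclude from the linear independence of $\{\al_0\wg\al:\al\neq\al_0\}$ in $\Lambda^2\z$. Your explicit remark that the $\Phi^*$ (i.e.\ $\Lambda^2\z$) component of $\delta(e_i)$ is killed by the bracket with elements of $\n$ is left implicit in the paper but is the same observation, and the sign bookkeeping works out exactly as in the paper's computation.
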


\begin{proof} From the cocycle condition, the assumption $\delta(\z)=0$ and
$\delta_W(e_{i_0})=\underset{\al\in A}{\sum}D_\al(e_{i_0})\wg\al
$, we have
$$0=\delta(\al_0)=\delta([e_{i_0},e_{j_0}])
=[\delta(e_{i_0}],e_{j_0}]+[e_{i_0},\delta(e_{j_0})]$$
$$= \left[\underset{\al\in A}{\sum}D_\al(e_{i_0})\wg\al, e_{j_0}
\right]
+ \left[e_{i_0},\underset{\al\in A}{\sum}D_\al(e_{j_0})\wg\al
\right]
$$
$$= \left[\underset{\al\in A}{\sum}\lambda_{i_0,\al}e_{i_0}\wg\al, e_{j_0}
\right]
+ \left[e_{i_0},\underset{\al\in A}{\sum}\lambda_{j_0,\al}e_{j_0}\wg\al
\right]
$$
$$=\al_0\wg \left(\underset{\al\in A}{\sum}\lambda_{i_0,\al}+\lambda_{j_0,\al}
\right)\al
$$
then each coefficient $\lambda_{i_0,\al}+\lambda_{j_0,\al}=0$ for all $\al\neq\al_0$.
\end{proof}

\begin{ex}For the graph 
$\xymatrix@-2ex{
&e_2\ar[rd]^{\beta}\\
e_1\ar[ru]^{\alpha}&&e_3\ar[ll]^{\gamma}
 }
$

we get  $f_3$, the free 2-step nilpotent Lie algebra on 3 generators 
$\{e_1,e_2,e_3\}$. The complete list of Lie bialgebra 
structures on $\f_3$ such that $\delta (\z)=0$ and  diagonal
 $D_{\alpha _i}$ for all $i=1,2,3$ is 
\[
\delta (e_1)=e_1\wg(a\alpha +b\beta +c\gamma) + \omega _1 =:e_1\wg A_1+\om_1
\]
\[
\delta (e_2)=e_2\wg(a\alpha -b\beta -c\gamma) +\omega _2=:e_2\wg A_2+\om_2
\]
\[
\delta (e_3)=e_3\wg(-a\alpha -b\beta +c\gamma) + \omega _3 =:e_3\wg A_3+\om_3
\]
for any $a,b,c\in k$, $\omega _i\in\Lambda ^2\z$ satisfying co - Jacobi condition:
\[\om _i\wg A_i=0,\ \ i=1,2,3\]
\end{ex}
For highly connected graphs the situation is even more favorable:

\begin{prop} Consider a graph $G=(V,A)$, a vertex
$i_0\in V$ and $\alpha \in A$. If there exists a path in $G$ of the forms\\

\hskip 2cm $
\xymatrix@-2ex{
j&\ar@{-}[l]_ \alpha i_0\ar[r]^{\beta_0}&i_1\ar[rd]^{\beta_1}&\\
 i_{N-1}\ar[ru]_{\beta_{N-1}}&   &      &i_2\ar[d]^{\beta_3}\\
    \ar@{..}[u]&&&i_3\ar@{..}[dl]\\
&&&
}$ \hskip 1cm or \hskip 1cm 
$
\xymatrix@-2ex{
& i_0\ar[r]^{\beta_0}&i_1\ar[rd]^{\beta_1}&\\
 i_{N-1}\ar[ru]^{\beta_{N-1}}&   &      &i_2\ar[d]^{\beta_3}&i\ar@{-}[d]^ \alpha\\
    \ar@{..}[u]&&&i_3\ar@{..}[dl]&j\\
&&&
}$

then $\lambda_{i_0,\alpha}=(-1)^N\lambda_{i_0,\alpha}$; if
$N$ is odd then $\lambda_{i_0,\alpha}=0$
\end{prop}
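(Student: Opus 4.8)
The plan is to exploit Proposition~\ref{proplambda}, which states that along any single edge $\al_0$ joining $i_0$ and $j_0$ one has $\lambda_{i_0,\al}=-\lambda_{j_0,\al}$ for every edge $\al\neq\al_0$, and to iterate it around the given cycle. Concretely, the hypothesis is that there is a closed walk $i_0\xrightarrow{\beta_0}i_1\xrightarrow{\beta_1}\cdots\xrightarrow{\beta_{N-1}}i_0$ of length $N$ in $G$, together with an edge $\al$ incident to $i_0$ (first picture) or to some vertex $i$ that is \emph{not} one of the cycle vertices but joined by $\al$ to a cycle vertex (second picture); in either case $\al$ is assumed distinct from all the $\beta_k$. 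First I would treat the first configuration: apply Proposition~\ref{proplambda} to the edge $\beta_0$ (joining $i_0$ and $i_1$) and the edge $\al\neq\beta_0$ to get $\lambda_{i_0,\al}=-\lambda_{i_1,\al}$; then apply it to $\beta_1$ (joining $i_1$ and $i_2$) with the same $\al$ to get $\lambda_{i_1,\al}=-\lambda_{i_2,\al}$, hence $\lambda_{i_0,\al}=\lambda_{i_2,\al}$; continuing around the cycle, after $N$ steps we return to $i_0$ and obtain $\lambda_{i_0,\al}=(-1)^N\lambda_{i_0,\al}$. If $N$ is odd this forces $2\lambda_{i_0,\al}=0$, i.e. $\lambda_{i_0,\al}=0$.

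The one point that requires care is the applicability of Proposition~\ref{proplambda} at each step: it needs $\al$ to be \emph{different from the edge currently being traversed}. This is exactly guaranteed by the standing assumption that $\al\notin\{\beta_0,\dots,\beta_{N-1}\}$, so the iteration never stalls. For the second configuration, the chain of equalities $\lambda_{i_1,\al}=-\lambda_{i_2,\al}=\cdots$ runs identically around the cycle and yields $\lambda_{i_0,\al}=(-1)^N\lambda_{i_0,\al}$ as before; the extra pendant edge $\al$ incident to the off-cycle vertex $i$ plays no role other than being an edge of $G$ distinct from the $\beta_k$, which is what lets us use $\al$ as the "second argument" in Proposition~\ref{proplambda} at every step. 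So the two pictures reduce to the same computation, and I would present them uniformly.

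The main obstacle — really the only subtlety — is bookkeeping: making sure that at each of the $N$ applications the edge $\beta_k$ genuinely joins $i_k$ and $i_{k+1}$ (indices mod $N$) and that $\al\neq\beta_k$, so that Proposition~\ref{proplambda} applies verbatim. Once this is set up, the conclusion $\lambda_{i_0,\al}=(-1)^N\lambda_{i_0,\al}$ is immediate by composing the $N$ sign flips, and oddness of $N$ gives $\lambda_{i_0,\al}=0$ provided the ground field has characteristic $\neq 2$ (which is implicit throughout, e.g. in the Heisenberg normalization $\tfrac14$ appearing earlier). I would conclude with one line noting that, via Proposition~\ref{proplambda} applied once more along $\al$ itself when $\al$ is part of a further odd cycle, one can propagate such vanishing to neighbouring vertices, but that is not needed for the statement as written.
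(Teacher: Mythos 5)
Your proof is correct: iterating Proposition~\ref{proplambda} along the cycle edges $\beta_0,\dots,\beta_{N-1}$ (each distinct from $\alpha$, so the proposition applies at every step) gives the alternating chain $\lambda_{i_k,\alpha}=-\lambda_{i_{k+1},\alpha}$ and hence $\lambda_{i_0,\alpha}=(-1)^N\lambda_{i_0,\alpha}$, which is exactly the argument the paper intends (it states this proposition without writing out a proof, as an immediate consequence of Proposition~\ref{proplambda}). Your remarks on the needed bookkeeping ($\alpha\notin\{\beta_k\}$, characteristic $\neq 2$) are the right caveats and nothing is missing.
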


As a corollary we have

\begin{coro} \label{propfn} If $n\geq 4$ and $\delta$ is a bialgebra structure   on  $\f_n$
satisfying $\delta(\z)=0$ and $D_\al$  diagonalizable for all $\al$, then
$$\delta(W)\subset\Lambda^2\z,$$
 that is, all $D_\al$ are necessarily zero and
$\delta(e_i)$ is of the form $\delta(e_i)=\om_i$
with   arbitrary $\om_i \in \Lambda^2\z$.
\end{coro}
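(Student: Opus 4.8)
The plan is to apply the previous proposition with $K_n$ in the role of $G$, exploiting that when $n\geq 4$ the complete graph is rich enough in triangles to force every eigenvalue $\lambda_{i_0,\alpha}$ to vanish. First I would fix a vertex $i_0$ and an edge $\alpha$ not incident to $i_0$; say $\alpha$ joins $i$ and $j$ with $i,j\neq i_0$ (such $\alpha$ exists because $n\geq 3$, but we will need $n\geq 4$ below). The goal is to produce in $K_n$ a path of odd length $N$ through $i_0$ of one of the two shapes required by the preceding proposition, so that $\lambda_{i_0,\alpha}=(-1)^N\lambda_{i_0,\alpha}=-\lambda_{i_0,\alpha}$, hence $\lambda_{i_0,\alpha}=0$.

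The key observation is that in $K_n$ any two vertices are adjacent, so a triangle on $\{i_0,i_1,i_2\}$ gives a closed path of length $3$ based at $i_0$, which is precisely the first configuration of the proposition (with $N=3$, $j$ and the edge $\alpha$ sitting off $i_0$). More carefully: since $n\geq 4$, pick a vertex $i_1\notin\{i_0,i,j\}$; then $i_0,i_1,i$ are pairwise adjacent in $K_n$, giving the triangle $i_0 \to i_1 \to i \to i_0$, a path of length $N=3$ returning to $i_0$, while $\alpha$ is the edge from $i$ to $j$ hanging off this cycle. This matches the second schematic of the previous proposition (a cycle based near $i_0$ together with a pendant edge $\alpha$ incident to one of its vertices). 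Applying that proposition yields $\lambda_{i_0,\alpha}=0$. Running this over all vertices $i_0$ and all edges $\alpha$ not incident to $i_0$ shows $\lambda_{i_0,\alpha}=0$ whenever $\alpha$ is not incident to $i_0$; and Proposition \ref{proplambda} already gives $\lambda_{i_0,\alpha}+\lambda_{j_0,\alpha}=0$ when $\alpha$ joins $i_0$ with $j_0$, but combined with the vanishing of all the ``off'' eigenvalues one sees (using a third vertex $k$, for which $\lambda_{k,\alpha}=0$ since $\alpha$ is not incident to $k$) that the remaining case is forced too; alternatively one argues directly that $\lambda_{i_0,\alpha}=\lambda_{k,\alpha'}$-type relations propagate the zero. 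Either way every $\lambda_{i_0,\alpha}=0$, i.e. $D_\alpha=0$ for all $\alpha$.

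Once all $D_\alpha$ vanish, the formula from Theorem \ref{teoconstruccion} gives $\delta(e_i)=\Phi^*(e_i)\in\Lambda^2\z$, so $\delta(W)\subset\Lambda^2\z$; writing $\om_i:=\delta(e_i)$, the only surviving constraint is co-Jacobi, which as in the $\f_3$ example reduces to $\om_i\wedge A_i=0$ with $A_i=0$ here, hence holds automatically, and the $\om_i\in\Lambda^2\z$ are arbitrary. This gives exactly the claimed description.

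\medskip

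The main obstacle I anticipate is the bookkeeping of \emph{which} edges $\alpha$ are reached by the odd-cycle argument: the previous proposition only directly kills $\lambda_{i_0,\alpha}$ for $\alpha$ \emph{not} incident to $i_0$, so the edges through $i_0$ itself need a short separate argument (combining Proposition \ref{proplambda} with the already-established vanishing at a third vertex). This is the one place where $n\geq 4$ is genuinely used: with only $n=3$ vertices one cannot find the extra vertex $i_1$ off the pendant edge, and indeed the $\f_3$ example shows nonzero $D_\alpha$ do occur there. Apart from that, everything is a direct application of the two preceding propositions plus the structure theorem.
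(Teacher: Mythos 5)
Your proof is correct and follows essentially the paper's route: in $K_n$ with $n\geq 4$ one can always complete any vertex $i_0$ to a triangle avoiding a prescribed edge (the fourth vertex is exactly what $n\geq 4$ buys), and the parity proposition with $N=3$ then forces $\lambda_{i_0,\alpha}=0$ for every $i_0$ and $\alpha$. The one deviation is your handling of edges $\alpha$ incident to $i_0$: the first configuration of the preceding proposition already allows $\alpha$ to be the pendant edge at $i_0$ itself (this is the paper's first picture in its proof), so no separate argument is needed; your detour --- applying Proposition \ref{proplambda} to an edge joining $i_0$ with a third vertex $k$ and using the already established $\lambda_{k,\alpha}=0$ --- is nevertheless valid, as long as you say explicitly that \ref{proplambda} is applied to the edge $\{i_0,k\}$ rather than to $\alpha$, since that proposition gives no information about the eigenvalue attached to the edge $\al_0$ joining the two vertices in question. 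Your concluding step (all $D_\al=0$, hence $\delta(e_i)=\om_i\in\Lambda^2\z$ arbitrary, with co-Jacobi and the cocycle condition becoming vacuous) agrees with the paper.
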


\begin{proof}if $i_0$ is a vertex and $\alpha$ an edge, in the complete graph on $n$ vertices with $n\geq 4$
we are always in one of the following situations:

\

$\xymatrix{
i_0\ar@{..}[d]\ar@{..}[dr]\ar@{-}[r]^{\alpha}&j\\
k\ar@{..}[r]&i}$
\hskip 1cm or   \hskip 1cm 
$
\xymatrix{
i_0\ar@{..}[r]\ar@{..}[d]&i\ar@{-}[d]^{\alpha}\\
k\ar@{..}[ur]&j
}$, \hskip 1cm 
so $\lambda_{i_0,\alpha}=0$ $\forall i_0,\alpha$.

\end{proof}

We finish exhibiting examples of non diagonalizable $D_i$'s in $\f_3$. We do not know
if there are similar nontrivial examples in $\f_n$ for $n\geq 4$.

\subsection{Non-diagonalizable $D$'s in $\f_3$}
We fix the notation
$\xymatrix@-2ex{
&e_2\ar[rd]^{\beta}\\
e_1\ar[ru]^{\alpha}&&e_3\ar[ll]^{\gamma}
 }
$

Since $[D_{\alpha_i},D_{\alpha_j}]=0$ we may always assume that there exists a basis where all $D_i$ are simultaneously upper triangular, that is, of the form
$\left(\begin{array}{ccc}
a&b&c\\
0&d&e\\
0&0&f
\end{array}\right)$.

But also, if one of them has three different elements in the diagonal, 
then all of them are diagonalizable, and we are in the previous case. So
we may assume that all of them have some multiplicity on the eigenvalues.
We slit in two cases: multiplicity 2 and multiplicity 3.
In multiplicity 3 there are two possibilities:
there is a $D$ with a single Jordan block 
of size $3$, or the maximal size of the Jordan block is 2.
Writing the first $D$ in Jordan form and using that the other $D$'s commute
with this one we arrive at the following cases:

Multiplicity 2: 
\[
D_\alpha=\left(\begin{array}{ccc}
\lambda&1&0\\
0&\lambda &0\\
0&0&\lambda'
\end{array}\right),\
D_\beta=\left(\begin{array}{ccc}
a&b&0\\
0&a&0\\
0&0&c
\end{array}\right),\
D_\gamma=\left(\begin{array}{ccc}
\mu&\nu&0\\
0&\mu&0\\
0&0&\tau
\end{array}\right)\hskip 1cm (I)
\]
where $\lambda\neq\lambda'$, and
 multiplicity 3:
\[
D_\alpha=\left(\begin{array}{ccc}
\lambda&1&0\\
0&\lambda&1\\
0&0&\lambda
\end{array}\right),\
D_\beta=\left(\begin{array}{ccc}
a&b&c\\
0&a &b\\
0&0&a
\end{array}\right),\
D_\gamma=\left(\begin{array}{ccc}
\mu&\nu&\rho \\
0&\mu&\nu\\
0&0&\mu
\end{array}\right)\hskip 1cm (II)
\]
or
\[
D_\alpha=\left(\begin{array}{ccc}
\lambda&1&0\\
0&\lambda&0\\
0&0&\lambda
\end{array}\right),\
D_\beta=\left(\begin{array}{ccc}
a&b&c\\
0&a &0\\
0&0&a
\end{array}\right),\
D_\gamma=\left(\begin{array}{ccc}
\mu&\nu&\rho \\
0&\mu&0\\
0&0&\mu
\end{array}\right)\hskip 1cm (III)
\]
From the matrix parameters introduce the elements $A,B,C,D\in W$
\[
A:= \lambda\alpha+a\beta+\mu \gamma,\
B:= \alpha +b\beta +\nu\gamma,\
C:= c\beta +\rho\gamma,\
D:=\lambda'\alpha+c\beta+\tau\gamma\]

Take elements $\om_i\in\Lambda^2\z$ ($i=1,2,3$)
and define
\[
\delta(e_i):=D_\alpha(e_i)\wedge\alpha+
D_\beta(e_i)\wedge\beta+
D_\gamma(e_i)\wedge\gamma+
\om_i\]
In all cases we have
\[\begin{array}{rcl}
\delta(e_1)&=&e_1\wedge A+ \om_1\\
\delta(e_2)&=&e_2\wedge A+e_1\wedge B+ \om_2
\end{array}
\]
and $\delta(e_3)$, depending on cases, is equal to
\[\delta(e_3)=
\left\{
\begin{array}{llc}
&e_3\wedge D+ \om_3&(I)\\
&e_3\wedge A +e_2\wedge B+e_1\wedge C+ \om_3&(II)\\
&e_3\wedge A +e_1\wedge C+ \om_3&(III)\\
\end{array}
\right.\]
The restriction
given by the co-Jacobi identity are
\[
\om_1\wedge A =0=\om_2\wedge A + \om_1\wedge B
\]
and, depending on cases
\[
\begin{array}{ccl}
(I)&0=&\om_3\wedge D\\
(II)&0=&\om_3\wedge A+\om_ 2\wedge B+\om_1\wedge C \\
(III)&0=&\om_3\wedge A+\om_1\wedge C
\end{array}
\]

\end{document}